\numberwithin{equation}{section}
\theoremstyle{plain}
\newtheorem{theorem}{Theorem}[section]
\newtheorem{hypothesis}{Hypothesis}
\newtheorem{lemma}[theorem]{Lemma}
\newtheorem{proposition}[theorem]{Proposition}
\newtheorem{corollary}[theorem]{Corollary}
\theoremstyle{remark}
\theoremstyle{definition}
\DeclareMathOperator{\tr}{tr}
\DeclareMathOperator{\dd}{d\!}
\newcommand{\R}{\mathbb{R}}
\newcommand{\SF}{\mathbb{S}}
\newcommand{\n}{\textbf{\em n}}
\renewcommand{\labelenumi}{(\roman{enumi})}
\begin{document}

\title[Entire solutions to equivariant elliptic systems]{Entire solutions to equivariant elliptic systems with variational structure}
\author{Nicholas D.\ Alikakos \and Giorgio Fusco}
\address{Department of Mathematics\\ University of Athens\\ Panepistemiopolis\\ 15784 Athens\\ Greece \and Institute for Applied and Computational Mathematics\\ Foundation of Research and Technology -- Hellas\\ 71110 Heraklion\\ Crete\\ Greece} 
\email{\href{mailto:nalikako@math.uoa.gr}{\texttt{nalikako@math.uoa.gr}}}
\address{Dipartimento di Matematica Pura ed Applicata\\ Universit\`a degli Studi dell'Aquila\\ Via Vetoio\\ 67010 Coppito\\ L'Aquila\\ Italy}
\email{\href{mailto:fusco@univaq.it}{\texttt{fusco@univaq.it}}} 
\thanks{The first author was supported by Kapodistrias grant No.\ 15/4/5622 at the University of Athens.}

\begin{abstract}
In the present paper we consider the system $\Delta u - W_u(u) = 0$, where $u: \R^n \to \R^n$, for a class of potentials $W: \R^n \to \R$ that possess several global minima and are invariant under a general finite reflection group $G$. We establish existence of nontrivial $G$-equivariant entire solutions connecting the global minima of $W$ along certain directions at infinity.
\end{abstract}

\maketitle

\section{Introduction}
We consider the system
\begin{equation}\label{system}
\Delta u - W_u(u) = 0, \text{ for } u: \R^n \to \R^n,
\end{equation}
where $W: \R^n \to \R$ and $W_u := (\partial W / \partial u_1, \dots, \partial W / \partial u_n)^{\top}$ is the gradient of $W$. We assume that $W$ has $N\geq 2$ distinct global minima $a_i$, for $i=1,\dots,N$, and address the problem of finding an entire solution $u: \R^n \to \R^n$ of \eqref{system} that {\em connects} the $N$ minima of $W$. That is, a solution of \eqref{system} such that 
\begin{equation}\label{lim-system}
\lim_{\lambda\to+\infty} u(\lambda\eta_i) = a_i, \text{ for } i=1,\dots,N,
\end{equation}
for certain unit vectors $\eta_i\in\SF^{n-1}$, where $\SF^{n-1}\subset\R^n$ is the unit sphere.

System \eqref{system} is formally the Euler--Lagrange equation corresponding to the action
\begin{equation}\label{action}
J(u) = \int_{\R^n} \left\{ \frac{1}{2} |\nabla u|^2 + W(u) \right\} \dd x.
\end{equation}
One of the obstructions in the study of \eqref{system} is that for dimensions $n \geq 2$ the action is infinite for the class of solutions we are interested in (see \cite{a}).

We now list our assumptions on the potential $W$.

\begin{hypothesis}[$N$ nondegenerate global minima]\label{h1}
The potential $W$ is of class $C^2$ and satisfies $W(a_i)=0$, for $i=1,\ldots,N$, and $W>0$ on $\R^n \setminus \{a_1,\dots a_N\}$. Furthermore, there holds $v^\top\partial^2 W(u)v \geq c^2 |v|^2$, for $v\in\R^n$ and $|u-a_i| \leq \bar{q}$, for some $c$, $\bar{q} > 0$, and for $i=1,\ldots, N$.
\end{hypothesis}
We recall some examples of potentials that have been studied in the past. The case $n=1$, $N=2$ is textbook material and the corresponding solution is known as the {\em heteroclinic connection}.  In \cite{bgs}, Bronsard, Gui, and Schatzman constructed a solution for $n=2$, $N=3$, while recently in \cite{gs}, Gui and Schatzman constructed a solution for $n=3$, $N=4$; these last two solutions are known as the {\em triple-junction solution} on the plane and the {\em quadruple-junction solution} in space respectively. Triple-junction and quadruple-junction solutions have additional significance of their own and we will comment on them later.

In all these works (for $n \geq 2$), the potentials $W$ have been assumed to have certain symmetries. This takes us to the next hypothesis.

\begin{hypothesis}[Symmetry]\label{h2}
The potential $W$ is invariant under a finite reflection group $G$ acting on $\R^n$ (Coxeter group), that is,
\begin{equation}\label{g-invariance}
W(gu) = W(u), \text{ for all } g \in G \text{ and } u \in \R^n.
\end{equation}
Moreover, we assume that there exists $M>0$ such that
$W(su) \geq W(u)$, for $s\geq 1$ and $|u|=M.$
\end{hypothesis}
We seek {\em equivariant} solutions of system \eqref{system}, that is, solutions satisfying
\begin{equation}\label{equivariance}
u(gx) = gu(x), \text{ for all } g \in G \text{ and } x \in \R^n.
\end{equation}
In \cite{bgs} $G=\mathcal{H}^{3}_{2}$, the group of symmetries of the equilateral triangle, with six elements, and in \cite{gs} $G=\mathcal{T}^{*}$, the group of symmetries of the tetrahedron, with twenty four elements.

The hypothesis next relates the number and location of the minima of $W$ to the group $G$. If $\mathcal{G}$ is a group, we denote by $|\mathcal{G}|$ the order of  $\mathcal{G}$.

\begin{hypothesis}[Location and number of global minima]\label{h3}
Let $F \subset \R^n$ be a fundamental region\footnote{See \cite{gb} or \cite{hu} and Section \ref{alg-pre}.} of $G$. We assume that $\overline{F}$ (the closure of $F$) contains a single global minimum of $W,$ say $a_1$, and let $G_{a_1}$ be the subgroup of $G$ that leaves $a_1$ fixed. Then, as it follows by the invariance of $W$, the number of the minima
of $W$ is
\begin{equation}\label{enne}
N= \frac{|G|}{|G_{a_1}|}.
\end{equation}
\end{hypothesis}
We give here some examples. For $\mathcal{H}^{3}_{2}$ on the plane, we can take as $F$ the $\frac{\pi}{3}$ sector. If $a_1 \in F$, then $N=6$, while if $a_1$ is on the walls, then $N=3$. In higher dimensions we have more options since we can place $a_1$ in the interior of $\overline{F}$, in the interior of a face, on an edge, and so on. For example, if $G=\mathcal{W}^*$, the group of symmetries of the cube in three-dimensional space, then $|G|=48$. If the cube is situated with its center at the origin and its vertices at the eight points $(\pm 1, \pm 1, \pm 1)$, then we can take as $F$ the simplex generated by $s_1 = e_1 + e_2 + e_3$, $s_2 = e_2 + e_3$, and $s_3 = e_3$, where the $e_i$'s are the standard basis vectors. We have then the following options:
\begin{enumerate}
\item On the edge $s_3$, $N=6$.
\item On the edge $s_1$, $N=8$.
\item On the edge $s_2$, $N=12$.
\item In the interior of a face, $N=24$.
\item In the interior of the fundamental region, $N=48$.
\end{enumerate}

The hypotheses so far have been purely geometric. Our final hypothesis is analytic.

\begin{hypothesis}[$Q$-monotonicity]\label{h4}
We restrict ourselves to potentials $W$ for which there is a continuous function $Q: \R^n \to \R$,
which, for some constants $C_{\pm} > 0$ and a $C^2$ function $H: \R^n \to \R$, such that $H(0) = 0$ and $H_u(0) = 0$, satisfies
\begin{subequations}\label{q-list}
\begin{align}
&Q \text{ is convex,}\label{q-list-a}\\
&Q(g u)=Q(u), \text{ for } u\in D,\ g\in G_{a_1},\label{q-list-b}\\
&Q(u + a_1) = |u| + H(u),\label{q-list-c}\\
&Q(u) >0 \text{ and } C_{-} \leq |Q_u(u)| \leq C_{+}, \text{ on } \R^n \setminus \{ a_1 \}\label{q-list-d},
\end{align}
\end{subequations}
and, moreover,
\begin{equation}\label{q-monotonicity}
\left\langle Q_u(u), W_u(u) \right\rangle \geq 0, \text{ in } D\setminus \{ a_1 \},
\end{equation}
where we have set
\begin{equation}
D:= \mathrm{Int}\left( {\cup_{g\in G_{a_1}} g\overline{F}} \right).
\end{equation}
\end{hypothesis}

For $n=1$ and even symmetry, for a double-well potential $W$, and $D=F=\{ u>0 \}$, $Q$-monotonicity implies that $W_u(u)(u-a_1)\geq 0$, for $u>0$.  

For $G=\mathcal{H}^{3}_{2}$ on the plane, $F$ the $\frac{\pi}{3}$ sector, and $a_1=(1,0)$, it can be verified that the triple-well potential
\[ W(u_1,u_2) = |u|^4 + 2 u_1 u_{2}^{2} - \frac{2}{3} u_{1}^{3} - |u|^2 + \frac{2}{3} \]
satisfies the $Q$-monotonicity condition in $D=\{ (r,\theta) \mid r>0,\ \theta \in (-\frac{\pi}{3},\frac{\pi}{3})\}$, with $Q(u)=|u - a_1|$, where $u=(u_1,u_2)$.

For $n=3$, $G=\mathcal{T}^*$, $F$ the simplicial cone generated by $(\sqrt{2/3},\, 0,\, 1/\sqrt{3})$, $(0,\, \sqrt{2/3},\, 1/\sqrt{3})$, $(0,\, 0,\, 1/\sqrt{3})$, and $a_1=(\sqrt{2/3},\, 0,\, 1/\sqrt{3})$, we can take as an example the quadruple-well potential
\[ W(u_1,u_2,u_3) = |u|^4 - \frac{4}{\sqrt{3}} (u^{2}_{1} - u^{2}_{2})u_3 - \frac{2}{3} |u|^2 + \frac{5}{9},\]
with $Q(u) = |u-a_1|$, where $u=(u_1, u_2, u_3)$, and $D$ the simplicial cone generated by
$(0,\, \sqrt{2/3},\, 1/\sqrt{3})$, $(0,\, -\sqrt{2/3},\, 1/\sqrt{3})$, $(\sqrt{2/3},\, 0,\, -1/\sqrt{3})$.

As a final example, take $G$ to be the reflection group on $\R^n$ generated by the coordinate planes, $F$ the simplicial cone generated by the standard basis $e_1 = (1,\ldots,0)$, \ldots, $e_n = (0,\ldots,1)$, and $a_1 = (\alpha_1, \ldots, \alpha_n)$, for $\alpha_i >0$. Then, the potential
\[
W(u) = \sum_{k=1}^{n} C_k\left( u_{k}^{2} ( u_{k}^{2} - 2 \alpha_{k}^{2} ) + \alpha_{k}^{4} \right), \text{ for } u=(u_1, \ldots,u_n) \in \R^n,
\]
where $C_k$ are given positive constants, satisfies the $Q$-monotonicity condition in $D=F$ with $Q=|u-a_1|$. Note that in this last example $a_1$ is in the interior of $\overline{F}$ and, therefore, $N=|G|=2^n.$

We refer to \cite[Proposition 1]{af2} for the details of the construction of the triple-well potential above, as well as for information on the construction of potentials in general. In \cite[Proposition 3]{af2} it is established that for any given reflection group $G$ there exist infinitely-many smooth potentials $W$ satisfying Hypotheses \ref{h1}--\ref{h4}.

Next we explain\footnote{Since $Q$ is not smooth at $a_1$ by \eqref{q-list-c}, the calculations below should be interpreted in the distributional sense: for $u\in L_{\mathrm{loc}}^1(\R^n,\R^n)$, $\Delta u\in L_{\mathrm{loc}}^1(\R^n,\R^n)$, we have  
\[ \Delta(Q(u(x))) \geq \langle \Delta u(x), Q_u(u(x))\rangle,\]
with the convention that $Q_u(0)=0$. This is a straightforward extension of the well-known Kato inequality (see \cite[p.\ 85]{hs}). We thank Alberto Farina for suggesting the relationship.} how the $Q$-monotonicity is utilized in the proof. If $u$ is $C^2$, then
\begin{equation}\label{trace}
\Delta Q(u(x)) = \tr \left\{ (\partial^2 Q(u(x))) (\nabla u(x)) (\nabla u(x))^\top \right\} + \left\langle Q_u (u(x)), \Delta u(x) \right\rangle,
\end{equation}
where $(\partial^2 Q)$  stands for the Hessian of $Q$. If now $u$ has the property
\begin{equation}\label{d-positivity}
u(\overline{F})\subset\overline{F}\quad \text{(positivity)},
\end{equation}
then $u(\overline{D})\subset\overline{D}$, and from \eqref{trace} and convexity it follows that
\begin{equation}\label{positivity}
\Delta Q(u(x)) \geq \langle Q_u (u(x)), \Delta u(x)\rangle,
\end{equation}
and, if $u$ is a solution of \eqref{system}, for $x \in D$ we have 
\begin{equation}\label{positivity1} 
\Delta Q(u(x)) \geq \langle Q_u (u(x)), W_u (u(x)) \rangle \geq 0, 
\end{equation}
from \eqref{q-monotonicity}. Subharmonicity then provides in $D$ a first global estimate on $|u-a_1|$. Hence, a key step is to show that the candidate solution $u$ is a {\em positive} map, that is, that it satisfies \eqref{d-positivity}.

We now proceed with the statement of the main results.
\begin{theorem}\label{theorem1}
Under Hypotheses \ref{h1}--\ref{h4}, there exists an equivariant classical solution to system \eqref{system} such that
\begin{enumerate}
\item $|u(x)-a_1| \leq K \mathrm{e}^{-k d(x,\partial D)}$, for $x \in D$ and for positive constants $k$, $K,$ \medskip
\item $u(F) \subset F$.
\end{enumerate}
In particular, $u$ connects the $N=|G|/|G_{a_1}|$ global minima of $W$:
\[ \lim_{\lambda \to +\infty} u(\lambda g \eta) = g a_1, \text{ for all } g \in G,\]
uniformly for $\eta$ in compact subsets of $D\cap\SF^{n-1}.$
\end{theorem}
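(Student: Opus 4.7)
The plan is to construct the entire solution as a limit of minimizers on expanding balls. For each $R > 0$, let $\mathcal{A}_R$ denote the class of $G$-equivariant $H^1(B_R;\R^n)$ maps with values in $C_0$, and let $u_R \in \mathcal{A}_R$ minimize
\begin{equation*}
J_R(u) = \int_{B_R} \bigl\{ \tfrac{1}{2}|\nabla u|^2 + W(u) \bigr\} \dd x.
\end{equation*}
Existence of $u_R$ follows by the direct method: Hypothesis \ref{h2} ensures that the projection onto $C_0$ decreases $J_R$, yielding the $L^\infty$ bound; $G$-invariance of $J_R$ preserves the equivariance constraint; and standard elliptic regularity makes $u_R$ a classical solution of \eqref{system} in $B_R$.

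The central step is the positivity property $u_R(D \cap B_R) \subset \bar D$. The idea is a reflection competitor: each wall of $\partial D$ lies in a reflection hyperplane $\Pi_g$ with $g \in G\setminus \mathrm{Stab}(a_1)$, and on each connected component $U$ of $\{x \in \bar F \cap B_R : u_R(x) \notin \bar D\}$ one replaces $u_R$ by $g\circ u_R$, where $g$ is chosen (refining $U$ if necessary) so that $g\, u_R(U) \subset \bar D$. The resulting competitor $\tilde u$ is continuous, because $u_R(\partial U) \subset \Pi_g$ and $g$ fixes $\Pi_g$ pointwise; stays in $C_0$ by $G$-invariance of $C_0$; and satisfies $J_R(\tilde u) = J_R(u_R)$ by $G$-invariance of $W$. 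Hence $\tilde u$ is also a minimizer, and unique continuation applied to the linear elliptic system satisfied by $u_R - \tilde u$ (from the mean-value form of $W_u(u_R)-W_u(\tilde u)$) forces $U = \emptyset$.

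Once positivity holds, \eqref{positivity} gives that $Q(u_R(\cdot))$ is subharmonic on $D \cap B_R$. Combined with $\partial^2 W \geq c^2 \mathrm{Id}$ near $a_1$ from Hypothesis \ref{h1} and the first-order behavior $Q(u+a_1) = |u|+\mathrm{o}(|u|)$ from Hypothesis \ref{h4}(c), one constructs an exponential barrier $\psi(x) = Ke^{-k\, d(x,\partial D)}$ with $k < c$: on the open set $\{Q(u_R) < r_0\}$, the subharmonic inequality sharpens to $\Delta Q(u_R) \geq c^2 Q(u_R)$ to leading order, and the comparison principle against $\psi$ propagates smallness from a boundary layer near $\partial D$ into the interior, uniformly in $R$. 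Interior elliptic estimates then yield uniform $C^{2,\alpha}_{\mathrm{loc}}$ bounds on $\{u_R\}$, and a subsequential limit $u_R \to u$ gives an entire $G$-equivariant classical solution satisfying (i), (ii), and the asymptotic behavior $u(\lambda g a_1) \to g a_1$ along each ray $\lambda g a_1 \in gD$.

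The principal obstacle is the positivity step. It cannot be separated from the decay estimate, since $Q$ is only defined on $\bar D$ and \eqref{positivity} is vacuous until $u_R(D) \subset \bar D$ is established; moreover the reflection competitor must be simultaneously compatible with equivariance, the confinement inside $C_0$, and the wall structure of $\partial D$, which requires a careful use of the Coxeter geometry of $G$ and of the fact that $\bar D$ is a convex union of fundamental chambers meeting at $a_1$.
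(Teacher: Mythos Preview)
Your outline departs from the paper in two places: you minimize \emph{without} a pointwise constraint, and you establish positivity by a reflection competitor plus unique continuation rather than by the parabolic flow. The positivity route you sketch is plausible in spirit and is a genuine alternative to the paper's gradient-flow argument (Theorem~\ref{theorem-2-1}), though the details---choosing a single reflection $g$ on a component where $u_R$ may have crossed several walls, preserving equivariance after the local modification, and applying unique continuation to a competitor that is only piecewise smooth---need considerably more care than you indicate.

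The real gap is in the decay step. Your barrier argument presupposes exactly what you are trying to prove. The strengthened inequality $\Delta Q(u_R)\geq c^2 Q(u_R)$ is only available on the set $\{Q(u_R)<r_0\}$; outside it you have only $\Delta Q(u_R)\geq 0$. To compare with $\psi(x)=K e^{-k\,d(x,\partial D)}$ on $D\cap B_R$ you would need $Q(u_R)\leq\psi$ on $\partial(D\cap B_R)$, but on $\partial B_R\cap D$ the barrier $\psi$ is already exponentially small while you have no smallness information on $u_R$ there. Moreover the phrase ``propagates smallness from a boundary layer near $\partial D$ into the interior'' has the direction reversed: near $\partial D$ the equivariant map takes values near $\partial D$, far from $a_1$, so $Q(u_R)$ is \emph{large} there. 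Smallness must be seeded deep inside $D$ and then spread outward.

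This is precisely why the paper does \emph{not} minimize freely. It imposes the constraint $q^{u}\leq\bar q$ on a fixed ball $B(x_R;L)\subset D\cap B_R$ (see \eqref{min-problems}), obtains a constrained minimizer, and then---using the two comparison Lemmas~\ref{comparison-lemma-1} and \ref{comparison-lemma-2} with the three auxiliary functions $\Psi_{\mathrm I},\Psi_{\mathrm{II}},\Psi_{\mathrm{III}}$---shows in Step~2 that the constraint is \emph{strictly} unsaturated. This simultaneously (a) frees the minimizer so that it solves the Euler--Lagrange equation, and (b) provides the seed of interior smallness that propagates to all of $D_{B_R}$ away from a fixed neighborhood of $\partial D_{B_R}$ (equation~\eqref{propagates}), after which the exponential estimate follows from one more application of Lemma~\ref{comparison-lemma-2}. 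Without some device of this kind---a constraint, or at minimum a clearing-out lemma turning the energy bound $J_R\leq CR^{n-1}$ into a pointwise bound on a definite interior region---your argument cannot get started.
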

We let $B_{x,R}$ be the ball of radius $R>0$ centered at $x \in \R^n$ and $B_R$ be the ball of radius $R>0$ centered at the origin; for $A \subset \R^n$ we set $A_R = A \cap B_R$ and for $A,B \subset \R^n$ we let $A+B = \{ a+b \mid a \in A,\, b \in B \}$. We denote by $W_{\mathrm{E}}^{1,2}(B_R;\R^n)$ the subspace of $W^{1,2}(B_R;\R^n)$ of the maps that satisfy the equivariance condition \eqref{equivariance} for $x \in B_R$. 

The proof of Theorem \ref{theorem1} is based on a family of constrained minimization problems
\begin{equation}\label{min-problems0}
\min_{\mathcal{A}^R} J_{B_R}, \text{ where } J_{B_R}(u) = \int_{B_R} \left\{ \frac{1}{2} |\nabla u|^2 + W(u) \right\} \dd x,
\end{equation}
over the set $\mathcal{A}^R\subset W_\mathrm{E}^{1,2}(B_R,\R^n)$ of {\em admissible} maps which is defined in \eqref{admissible-set}. The admissible set $\mathcal{A}^R\subset W_\mathrm{E}^{1,2}(B_R,\R^n)$ is defined by imposing two constraints: the constraint of positivity \eqref{d-positivity} and the pointwise bound
\begin{equation}\label{bo1}
|u(x)-a_1| \leq q_0 < \bar{q}, \text{ for } x\in\Omega^R + B_{\delta^{\prime}\!/2},
\end{equation}
where $\bar{q}$ is the constant in Hypothesis 1, $\Omega^R \subset D_R$ is defined in \eqref{omegar-def}, and $q_0$, $\delta^{\prime}$ are suitable positive constants.

Problem \eqref{min-problems0} provides a family of minimizers $\{ u_R \in\mathcal{A}^R \}$; we seek then to construct the solution by taking the limit, that is,
\begin{equation}\label{limit-solution}
u(x) = \lim_{R\to\infty} u_R(x).
\end{equation}
For carrying out this procedure and to show that the constraints imposed by membership in $\mathcal{A}^R$ are inactive, we need uniform estimates in $R$. 

Our proof consists of a continuity argument (topological part) and a PDE part. The continuity argument is concerned with positivity; it utilizes the gradient flow
\begin{equation}\label{evolution-problem}
\begin{cases}
\dfrac{\partial u}{\partial t} = \Delta u - W_u(u), \text{ in } B_R \times (0,\infty),\bigskip\\
\dfrac{\partial u}{\partial \n} = 0,  \text{ on } \partial B_R \times (0,\infty), \text{ where } {\partial}/{\partial \n} \text{ is the normal derivative},\bigskip\\
u(x,0) = u_0(x), \text{ in } B_R,
\end{cases}
\end{equation}
in the Sobolev space of equivariant maps $W^{1,2}_{\mathrm{E}}(B_R; \R^n)$. We let $t\to u(\cdot,t,u_0)$ be the solution of \eqref{evolution-problem}. We establish that the set of positive maps (in the class of equivariant Sobolev maps)
\begin{equation}\label{pos}
\mathcal{U}^\mathrm{Pos}:=\big\{ u \in W^{1,2}_{\mathrm{E}}(B_R;\R^n) \mid u ( \overline{F_R} ) \subset \overline{F} \big\}
\end{equation}
is strongly (positively) invariant under the flow \eqref{evolution-problem} meaning that $u_0(\overline{F_R})\cap F\neq\varnothing$ implies $u(\cdot,t,u_0)\in\mathcal{U}_0^{\mathrm Pos}$, for $t>0$, where 
\begin{equation}\label{pos-0}
\mathcal{U}_0^\mathrm{Pos} :=\big\{ u \in W^{1,2}_{\mathrm{E}}(B_R;\R^n) \mid u ( \overline{F_R} \cap F ) \subset  F  \big\}.
\end{equation}
With the help of this strong invariance, we establish that there exists an $R_0 > 0$, such that for $R > R_0$ the minimization problem \eqref{min-problems0} has a solution that satisfies the Euler--Lagrange equation $\Delta u - W_u(u) = 0$ in $B_R$. We do not know if minimizing freely without restricting to the set of positive maps will automatically render a positive map.  

The PDE part of the proof is concerned with the pointwise estimates leading to the exponential estimate in Theorem \ref{theorem1}. To indicate the main ideas we assume $Q(u)=|u-a_1|$ and set $q^{u_R}=Q(u_R).$ By positivity \eqref{d-positivity} and by \eqref{positivity},
\begin{equation}\label{pos1}
\Delta q^{u_R} \geq 0, \text{ in } D_R.
\end{equation}
On the other hand, by the nondegeneracy condition in Hypothesis \ref{h1}, we have
\begin{equation}\label{pos2}
\Delta q^{u_R} \geq c^2 q^{u_R}, \text{ where } q^{u_R}\leq\bar{q}.
\end{equation}
Estimate \eqref{pos1} provides a first global bound on $q^{u_R}$ in $D_R$, while estimate \eqref{pos2} implies a stronger exponential bound on $q^{u_R}$ in $\Omega^R$.   For general $Q$ we have to develop first a global coordinate system in $\R^n$ in terms of the level sets of $Q$. By suitably combining \eqref{pos1} and \eqref{pos2} we can construct a local comparison function that enforces (uniformly in $R$) the estimate $|u(x)-a_1| \leq K \mathrm{e}^{-kd(x,\partial D_R)}$, for $x \in D_R$.
  
Previous works on special cases of major interest are \cite{bgs} and \cite{gs}. Our approach and point of view are different and, in particular, we work with a different set of assumptions. In \cite{bgs} and \cite{gs} the authors proceed via Dirichlet problems and build up a higher-dimensional object out of lower-dimensional solutions. We instead proceed via minimization with two constraints. The solution we construct is a global minimizer of $J_{B_R}$ in the class of positive maps satisfying in addition \eqref{bo1}. The positivity constraint is removed via the gradient flow. The other constraint is removed via comparison arguments. We note that by the results of Palais \cite{palais}, equivariance is not a constraint, in the sense that a critical point in the equivariance class is automatically a critical point in $W^{1,2}(\R^n;\R^n)$. The paper \cite{af1} contains some seeds of the present work. 

Symmetry is a rather restrictive assumption. On the other hand, for general potentials that are only required to satisfy Hypothesis \ref{h1}, it may be impossible to characterize a solution of \eqref{system} and \eqref{lim-system} via minimization of the action. Indeed, some of the solutions given by Theorem \ref{theorem1} are expected to be unstable with respect to compact nonsymmetric perturbations. Particular cases where the existence of solutions of \eqref{system} and \eqref{lim-system} has been established without assuming symmetry are for $N=2$, $n\geq 1$, studied in Sternberg \cite{ste}, and also in \cite{af3}, and for $N=3$, $n=2$ in S\'aez Trumper \cite{st1}, where the existence of a triple junction is shown by utilizing the gradient flow. A possible approach for removing {\em a posteriori} the assumption of symmetry could be to establish the stability of the constructed solution in the class of general compact perturbations. This is reasonable for at least those solutions in Theorem \ref{theorem1} which enjoy extra minimality properties (as, for example, the triple-junction solution).  Finally, in light of \cite{af1}, uniqueness should not be expected in general.

The scalar problem related to \eqref{system}, for $u:\R^n \to \R$, and without any symmetry hypotheses on the solution, has been the object of intensive investigation for many years, with the De Giorgi conjecture and the related contributions at the center of this activity (see the expository article of Farina and Valdinoci \cite{fv}).  On the physical side, we note that for describing coexistence of three or more phases ($N \geq 3$), a vector-order parameter $u$ is needed. A triple-well potential in $\R^2$ or a quadruple-well potential in $\R^3$ would be appropriate  for modeling coexistence of three or four phases correspondingly, with the origin $x=0$ representing the coexistence point (or junction). On the geometric side, the rescaled solution $u_\varepsilon (x) := u( x / \varepsilon)$ in the triple and quadruple-well cases is expected to converge, as $\varepsilon \to 0$, to the solution of the corresponding partitioning problem (see Baldo \cite{b}). The boundaries of the partitioning sets form a system of weighted minimal surfaces meeting in groups of three along free-boundary curves called `liquid edges', and liquid edges meet in groups of four at `supersingular' points, the coexistence points mentioned above (cf.\ Dierkes {\em et al.} \cite[\S 4.10.7]{dhkw1}).

The relevance of the solutions of \eqref{system} in the description of the neighborhood of the junction was first pointed out in Bronsard and Reitich \cite{br}, where also the formal linking of the diffused and sharp-interface models was established for $n=2$. For rigorous linking, for $n=2$, see S\'aez Trumper \cite{st2}. For the associated sharp-interface evolution problem involving motion by mean curvature and Plateau angle conditions see \cite{br}, for $n=2$ in the classical smooth evolutions. See also Mantegazza, Novaga, and Tortorelli \cite{mnt} for initiating and partially resolving globally in time the triple-junction case for $n=2$, and Freire \cite{f1}, Schn\"urer and Schulze \cite{ss}, and Schn\"urer {\em et al.} \cite{setal} for related work for $n=2$. For the evolution problem for general $n$ see Freire \cite{f2}.

The paper is structured as follows. In Section \ref{positivity-property} we establish the strong positivity property of the semigroup that \eqref{evolution-problem} generates. In Section \ref{coordinate-system} we introduce the $Q$-coordinate system and in Sections \ref{comparison-function} and \ref{replacement} we state and prove the comparison lemmas needed for deriving the estimate (i) in Theorem \ref{theorem1}. Finally, in Section \ref{proof} we give the proof of Theorem \ref{theorem1}  .

\section{The positivity property}\label{positivity-property}
\subsection{Algebraic preliminaries}\label{alg-pre}
For the general theory of reflection groups we refer to \cite{gb} and \cite{hu}.
Let $G$ be a {\em Coxeter group}, that is, a finite effective subgroup of the orthogonal group $O(\R^n)$, generated by a set of reflections. A reflection $\gamma\in G$ is associated to the hyperplane $\pi_\gamma = \{ x\in\R^n \mid \langle x,\eta_\gamma \rangle = 0 \}$ via
\begin{equation}
\gamma x = x-2 \langle x, \eta_\gamma \rangle \eta_\gamma, \text{ for } x\in\R^n,
\end{equation}
where $\eta_\gamma\in\SF^{n-1}$ is a unit vector.
Every finite subgroup of $O(\R^n)$ has a {\em fundamental region}, that is, a subset $F\subset\R^n$ with the following properties:
\begin{enumerate}
\item $F$ is open and convex,
\item $F\cap gF = \varnothing$, for $I \neq g\in G$, where $I$ is the identity,
\item $\R^n = \cup\{g\overline{F} \mid g\in G\}$.
\end{enumerate}
We choose the orientation of $\eta_\gamma$ so that $F\subset\mathcal{P}_\gamma^+$, where  $\mathcal{P}_\gamma^+=\{x\in\R^n \mid \langle x,\eta_\gamma \rangle > 0\}$. Then, we have
\begin{equation}\label{fundamental-region}
F=\cap_{\gamma\in\Gamma}\mathcal{P}_\gamma^+,
\end{equation}
where $\Gamma\subset G$ is the set of all reflections in $G$.
Given $A\subset\R^n$, the (pointwise) {\em stabilizer} of $A$, denoted by $\mathrm{Stab}[A]$, is the subgroup of $G$ that fixes $A$ pointwise, that is,
\begin{equation}
\mathrm{Stab}[A]=\{g\in G \mid g x=x, \text{ for all } x\in A\}.
\end{equation}
$\mathrm{Stab}[A]$ is the reflection group generated by the reflections that it contains (\cite[p.\ 23]{hu}). In particular, $G_{a_1}$ defined in Hypothesis \ref{h3} is a reflection group. For $A\subset\R^n$ a nonempty set, we also define $G_A\subset G$ to be the subgroup that leaves $A$ fixed as a set, that is,
\begin{equation}\label{stabilizer}
G_A=\{g\in G \mid g A=A \}.
\end{equation}
We conclude this section with a characterization of $G_D$.
\begin{lemma}\label{lemma-g}
There holds
\begin{equation}\label{ga1-gd}
G_{a_1}=G_D.
\end{equation}
\end{lemma}
\begin{proof}
Observe that $G_D=G_{\overline{D}}$ and that by definition, $\overline{D}=\cup\{g\overline{F} \mid g\in G_{a_1}\}$. It follows that
\begin{equation}
g\overline{D}=\overline{D}, \text{ for all } g\in G_{a_1}, 
\end{equation}
and, therefore, that $G_{a_1}\subset G_{\overline{D}}$. To show that $G_{\overline{D}}\subset G_{a_1}$, we note that, by property (ii) of the fundamental region, there is a one-to-one correspondence between $G_{a_1}$ and the orbit  $\{g\overline{F} \mid g\in G_{a_1}\}$ of $\overline{F}$ under $G_{a_1}$. Therefore, $g^\prime\in G\setminus G_{a_1}$ implies $g^\prime\overline{F} \not\in \{g\overline{F} \mid g\in G_{a_1}\}$ and, in turn, $g^\prime\overline{D}\neq\overline{D}$.
\end{proof}

\subsection{Parabolic flows and positivity}   
We can assume  that  $W$ is a $C^2$ potential satisfying the global bound
\begin{equation}\label{global-bound}
|\partial^{2}_{u_i u_j} W(u)| < C, \text{ in } \R^n.
\end{equation}
This can be imposed without loss of generality because of the {\em a priori} pointwise bound \eqref{pointwise-bound}. As before, we denote by $u(\cdot,t;u_0)$ the solution of \eqref{evolution-problem} and let  $\mathcal{U}^\mathrm{Pos}$ and $\mathcal{U}_0^\mathrm{Pos}$ be the sets of equivariant positive and strongly positive maps defined in \eqref{pos} and \eqref{pos-0}. 
 
\begin{theorem}\label{theorem-2-1}
Suppose $W$ satisfies the bound \eqref{global-bound} and the symmetry \eqref{g-invariance}. Then, \eqref{evolution-problem} leaves the positive class $\mathcal{U}^\mathrm{Pos}$ invariant, that is,
\[ {\mathcal{U}}^\mathrm{Pos} \ni u_0 \mapsto u(\cdot,t;u_0) \in \mathcal{U}^\mathrm{Pos},\]
and, moreover,
\[ u(\cdot,t;u_0) \in \mathcal{U}_0^\mathrm{Pos}, \text{ for } t>0, \]
 provided $u_0(\overline{F_R}) \cap F \neq \varnothing$.
 \end{theorem}

We begin with a lemma.
\begin{lemma}\label{lemma2}
Let $u : B_R \to \R^n$ be an equivariant map. Then, $u$ is a
positive map if and only if
\begin{equation}\label{closure1}
u(\overline{(\mathcal{P}_\gamma^+)_R} ) \subset\overline{\mathcal{P}_\gamma^+}
  , \text{ for all } \gamma\in\Gamma,
\end{equation}
where $(\mathcal{P}_\gamma^+)_R  = \mathcal{P}_\gamma^+\cap B_R$.
\end{lemma}

\begin{proof}
Suppose that \eqref{closure1} holds. Then
\[ u(\overline{F_R}) = u ( \cap_{\gamma\in\Gamma} \overline{(\mathcal{P}_\gamma^+)_R} ) \subset \cap_{\gamma\in\Gamma}\, u (\overline{(\mathcal{P}_\gamma^+)_R}) \subset \cap_{\gamma\in\Gamma}\,\overline{\mathcal{P}_\gamma^+} = \overline{F}.\]
Hence, $u$ is positive.

Conversely, suppose that $u$ is a positive equivariant map on $B_R$. Then, equivalently, $u_{\mathrm{e}}$ defined by
\begin{equation}\label{u-e-def}
u_{\mathrm{e}}(x) := 
\begin{cases}
u(x), &\text{for } x \in B_R\smallskip\\
0,  &\text{for } x \in \R^n \setminus B_R
\end{cases}
\end{equation}
is a positive equivariant map on $\R^n$. For any $g \in G$, we have from equivariance and positivity,
\begin{equation}\label{u-e}
u_{\mathrm{e}} (g(\overline{F})) = g(u_{\mathrm{e}}(\overline{F})) \subset g(\overline{F}).
\end{equation}
Now pick a $\gamma\in\Gamma$ and take an $x \in
 \mathcal{P}_\gamma^+ $ and fix it. There is a $g \in G$, denoted by
$g_x$, such that $x \in g_x(\overline{F})$ and $g_x(F)$ is also a
fundamental region. Since for each fundamenal region $F^\prime$ and for each reflection $\gamma$ we have either $F^\prime\subset\mathcal{P}_\gamma^+$ or $F^\prime\subset-\mathcal{P}_\gamma^+$, we conclude that
\begin{equation}
g_x(\overline{F}) \subset \overline{\mathcal{P}_\gamma^+}.
\end{equation}
Thus, by \eqref{u-e}, $u_{\mathrm{e}} (\overline{\mathcal{P}_\gamma^+}) \subset \overline{\mathcal{P}_\gamma^+}$, and so \eqref{closure1} follows.
\end{proof}

We continue with the
\begin{proof}[Proof of Theorem \ref{theorem-2-1}]
Consider \eqref{evolution-problem} with  $u_0 \in \mathcal{U}^\mathrm{Pos}$. By the regularizing property of the equation, the solution is classical for $t >0$, and by \eqref{global-bound}, it exists globally in time and belongs to $C([0,+\infty); W^{1,2}(B_R; \R^n)) \cap C^1((0, +\infty); C^{2+\alpha}(B_R; \R^n) \cap C(\overline{B_R}; \R^n))$, for some $\alpha \in (0,1)$ (see \cite{h}). Consider a reflection $\gamma\in\Gamma$ and set 
\[
\begin{cases}
\zeta(x,t) = \langle u(x,t,u_0),\eta_\gamma \rangle, &\text{in } B_{R} \times (0,\infty),\\
\zeta_0(x) = \langle u_0(x),\eta_\gamma \rangle, &\text{in } B_{R}.
\end{cases}
\]
By taking the inner product of equation \eqref{evolution-problem} with $\eta_\gamma$, we obtain
\begin{equation}\label{phi-problem}
\begin{cases}
\dfrac{\partial \zeta}{\partial t} = \Delta \zeta + c\zeta, &\text{in } B_{R} \times (0,\infty),\bigskip\\
\dfrac{\partial \zeta}{\partial \n} =0, &\text{on } \partial B_{R} \times (0,\infty),\bigskip\\
\zeta(\cdot,0) = \zeta_0,
\end{cases}
\end{equation}
where we have set 
\[ c(x,t)=\dfrac{\langle W_u(u(x,t,u_0),\eta_\gamma\rangle}{\zeta(x,t)}.\]

From  the equivariance of $u(\cdot,t,u_0)$ and $W_u(\gamma u) = \gamma W_u(u)$ it follows that 
\begin{align}
\zeta(x,t) &= -\zeta(\gamma x,t), \text{ in } B_{R} \times (0,\infty),\label{zeta-invariance} \\
c(x,t) &= c(\gamma x,t), \text{ in } B_{R} \times (0,\infty) \label{c-invariance}.
\end{align} 
From the symmetry of $W$ we also have that $u \in \pi_\gamma$ implies  $W_u(u) \in \pi_\gamma$. From this we deduce
\begin{equation}
\langle W_u(u),\eta_\gamma \rangle = \langle u, \eta_\gamma \rangle \left\langle
\int_0^1W_{uu}\big(u + (s-1)\langle u, \eta_\gamma \rangle
\eta_\gamma\big) \eta_\gamma \dd s,\, \eta_\gamma \right\rangle.
\end{equation}
Thus, the coefficient $c(x,t)$ of $\zeta$ in \eqref{phi-problem} is bounded (actually continuous) on $B_{R}\times(0,\infty)$.

Since $u_0$ is a positive map, we have $\zeta_0 \geq 0$ for $\langle x,\eta_\gamma \rangle\geq 0$.   Therefore, by Lemma \ref{lemma2}, for establishing positivity it is sufficient to show that $\zeta(x,t) \geq 0$, for $x\in B_R^+ = \{ x \in B_R \mid \langle x,\eta_\gamma \rangle> 0 \} $ and $t \geq 0$. We note that by \eqref{zeta-invariance} there holds $\zeta(x,t) = 0$ for $x\in \pi_\gamma \times [0,\infty)$, hence if $\zeta$ is a classical solution of \eqref{phi-problem}, we have that $\zeta(x,t)$ is nonnegative on $B_{R}^{+} \times [0,\infty)$ by the maximum principle. Since mollification preserves positivity \cite{eg} and symmetry, the general case follows by continuous dependence in $W^{1,2} (B_R; \R^n)$ for \eqref{phi-problem} (see \cite{h}).

Finally, since $\zeta(x,t)=0$ for $x \in \pi_\gamma \times (0,\infty)$ and since $\zeta(\cdot,t) \in C^{2+\alpha}(B_R)\cap C(\overline{B_R})$ for $t>0$, the Hopf boundary lemma applies to render that
\[
\zeta(x,t) > 0, \text{ in } B_{R}^{+} \times (0,\infty),
\]
unless $\zeta(x,t) \equiv 0$, hence unless $\zeta_0(x) \equiv 0$. But the hypothesis $u_0(\overline{F_R}) \cap F \neq \varnothing$ excludes this second option.
\end{proof}

\section{The coordinate system}\label{coordinate-system}
\begin{lemma}\label{coord-lemma}
Suppose that $Q: \R^n \to \R$ satisfies \eqref{q-list} in Hypothesis \ref{h4}. Then, the following hold.
\begin{enumerate}
\item For each $\nu \in \mathbb{S}^{n-1}$, the ODE system
\begin{equation}\label{dotu}
\frac{\dd u}{\dd q} = \frac{Q_u(u)}{\langle Q_u(u), Q_u(u) \rangle}, \text{ for } u \in \R^n \setminus \{ a_1 \},
\end{equation}
has a unique solution $\tilde{u}: (0,+\infty) \to \R^n$ such that 
\begin{equation}\label{limnu}
\lim_{q\to 0+} \tilde{u}(q;\nu) = a_1\quad \text{and} \quad \lim_{q\to 0+}
\frac{\tilde{u}(q;\nu) - a_1}{|\tilde{u}(q;\nu) - a_1|} = \nu.
\end{equation}

\item The map $\tilde{u}$ and its partial derivatives $\tilde{u}_q$, $\tilde{u}_\nu$ with respect to $q$, $\nu$, extend continuously to $q=0$ and
\[
\tilde u(0;\nu)=a_1,\quad\quad \tilde u_q(0;\nu)=\nu, \quad\quad \tilde u_\nu(0;\nu)=0. 
\]
Moreover,
\[
C_-^\prime\leq \vert\tilde{u}_q(q;\nu)\vert\leq C_+^\prime,
\]
with $C_-^\prime = C_-C_+^{-2}$, $C_+^\prime = C_+C_-^{-2}$.

\item It results that
\begin{equation}\label{nu-symmetry}
\tilde{u}(q;g\nu) = g \tilde{u}(q;\nu), \text{ for } \nu\in\SF^{n-1},\, g\in G_D=G_{a_1}.
\end{equation}

\item The map defined through the solution
\begin{equation*}
(q,\nu) \mapsto \tilde{u}(q;\nu),
\end{equation*}
is a $C^2$ diffeomorphism of $(0,+\infty)\times\SF^{n-1}$ onto $\R^n\setminus \{ a_1 \}$.
\end{enumerate}
\end{lemma}

\begin{proof}
For the proof we refer to  \cite[Proposition 2]{af2}. Here  we present a proof under the stronger hypothesis
\[ Q(u) = |u-a_1|, \text{ for } |u-a_1| \leq r_0,\]
with $r_0 >0$ and small. 

From \eqref{dotu} we have that
\[ \dfrac{\dd}{\dd q} Q(\tilde u(q)) = 1.\]
This implies that the left extremum of the interval of existence of $\tilde u$ is $q=0$ and, furthermore, that
\begin{equation}
\lim_{q\to 0+} \tilde u(q) = a_1.
\end{equation}
Moreover, for $|u-a_1| \leq r_0$  we have that $Q_u(u)={(u-a_1)}/{\vert u-a_1\vert}$ and \eqref{dotu} takes the form ${\dd u}/{\dd q} = (u-a_1)/{\vert u-a_1\vert}$. Therefore, 
\[ \dfrac{\dd}{\dd q} \frac{\tilde u-a_1}{|\tilde u-a_1|} = 0 ,\]
hence, the existence of the second limit in \eqref{limnu} follows. Statements (ii) and (iv) follow by standard ODE theory. Uniqueness and \eqref{q-list-b} imply (iii).
\end{proof}

We regard the pair $(q,\nu)$ as the {\em polar} coordinates of $u=\tilde{u}(q;\nu)$ and associate to the potential $W$ the function $V: (0,+\infty) \times \SF^{n-1} \to \R$ defined by
\begin{equation}\label{v-def}
V(q,\nu) := W (\tilde{u}(q;\nu)),\text{ for } (q,\nu) \in (0,+\infty) \times \SF^{n-1}.
\end{equation}
From \eqref{nu-symmetry} and \eqref{g-invariance} it follows
\begin{equation}
V(q,g\nu) = V(q,\nu),\text{ for } (q,\nu) \in (0,+\infty) \times \SF^{n-1},\, g \in G_{D}.
\end{equation}
We denote by $\Sigma \subset (0,+\infty) \times \SF^{n-1}$ the inverse image of $D\setminus\{a_1\}$ via the diffeomorphism $(q,\nu) \to \tilde{u}(q;\nu)$. The set $\Sigma$ is of the form
\begin{equation}\label{sigma-structure}
\Sigma = \{ (q,\nu) \mid q \in (0,q_\nu),\, \nu \in \mathbb{S}^{n-1} \},
\end{equation}
where, for each $\nu \in \SF^{n-1}$,  $(0,q_\nu)$ is the interval the map $q \to \tilde{u}(q;\nu)$ spends in $D$. We remark that \eqref{q-monotonicity} in Hypothesis \ref{h4} implies, via \eqref{v-def} and \eqref{dotu},
\begin{equation}\label{v-pos}
\frac{\partial V}{\partial q} (q,\nu) \geq 0, \text{ for } (q,\nu)\in\Sigma.
\end{equation}
On the other hand, by Hypothesis \ref{h1},
\begin{equation}\label{vc-pos}
\frac{\partial V}{\partial q} (q,\nu) \geq c^2 \langle \tilde{u}_q(q;\nu), \tilde{u}_q(q;\nu) \rangle p, \text{ for } 0 \leq p \leq q \leq \bar{q},\;\nu\in\SF^{n-1}.
\end{equation}
We show in \eqref{pointwise-bound} and \eqref{bobo} that we can restrict to bounded values of $q$. Therefore, by changing the definition of $V(q,\nu)$ if necessary, we can also assume
\begin{equation}\label{v-pos-1}
\frac{\partial V}{\partial q} (q,\nu) \geq 0, \text{ for } q \gg 1. 
\end{equation}
Given $u \in W^{1,2}(B_R;\R^n)$, set $\mathcal{S}_u := \{x\in B_R \mid u(x) = a_1\}$.
The diffeomorphism defined in Lemma \ref{coord-lemma} associates to the restriction to $\overline{D_R}\setminus {\mathcal{S}_u}$ of any positive equivariant map $u\in\mathcal{U}^{\rm Pos}$ a {\em polar} representation $(q^u,\nu^u) : \overline{D_R}\setminus{\mathcal{S}_u} \to \R\times\SF^{n-1}$ as follows  
\begin{equation}\label{polar-coord}
u\vert_{\overline{D_R}} \leftrightarrow (q^u,\nu^u), \text{ where } u(x)=\tilde u(q^u(x);\nu^u(x)),\, x\in\overline{D_R}\setminus{\mathcal{S}_u}.
\end{equation}
From \eqref{nu-symmetry} and the equivariance of $u$ it follows that the maps $q^u : \overline{D_R}\setminus{\mathcal{S}_u} \to \R^n$ and $\nu^u : \overline{D_R}\setminus{\mathcal{S}_u} \to \SF^{n-1}$ satisfy
\begin{equation}\label{nueq-symmetry}
q^u(g x)=q^u(x) \quad \text{and} \quad \nu^u(g x)=g \nu^u(x),
\end{equation}
for all $x\in\overline{D_R}\setminus{\mathcal{S}_u}$ and all $g\in G_D$.

From (\ref{polar-coord}) we calculate
\[ u_{x_i}(x) = \tilde{u}_q q_{x_i}^u(x) + \tilde{u}_\nu \nu_{x_i}^u(x),\]
thus, utilizing \eqref{diff-nu-1} below,
\begin{equation}\label{nabla-v}
|\nabla u |^2 = \langle \tilde{u}_q, \tilde{u}_q \rangle |\nabla q^u|^2 + \sum_{j=1}^{n} \langle \tilde{u}_\nu \nu_{x_j}^u, \tilde{u}_\nu \nu_{x_j}^u \rangle,
\end{equation}
where $\vert T\vert$ denotes the Euclidean norm of the matrix $T$. From $u\in W^{1,2}(B_R;\R^n)$  it follows that the Euclidean norm $\vert u-a_1\vert$ belongs to $W^{1,2}(B_R;\R)$ hence 
\[ q^u \in W^{1,2} (D_R;\R). \]
From \eqref{nabla-v} and \eqref{v-def} we obtain that, under the standing assumption $u\in\mathcal{U}^{\rm Pos}$, the action takes the form
\begin{align*}
J_{B_R}(u) &=N \int_{D_R} \Big\{ \frac{1}{2} |\nabla u|^2 + W(u) \Big\} \dd x\\
&=N \int_{D_R\cap\{\vert u-a_1\vert>0\}} \Big\{ \frac{1}{2} |\nabla u|^2 + W(u) \Big\} \dd x\nonumber\\
&=N \int_{D_R\cap\{q^u>0\}} \Big\{ \frac{1}{2}\big( \langle \tilde{u}_q, \tilde{u}_q \rangle |\nabla q^u|^2 + \sum_{j=1}^{n} \langle \tilde{u}_\nu \nu^u_{x_j}, \tilde{u}_\nu \nu^u_{x_j} \rangle \big) + V(q^u,\nu^u) \Big\} \dd x,
\end{align*}
where $N= {|G|}/{|G_{a_1}|}$ and we have used $\vert \nabla u\vert=0$ a.e.\ on the measurable set $\{x \mid u(x)=a_1\}$.

\begin{lemma}
Consider the mapping $(q,\nu) \mapsto \tilde{u}(q;\nu)$ as defined in Lemma \ref{coord-lemma}. Then, for any fixed vector $t \perp \nu$, the quadratic form
\begin{equation}\label{quadratic}
\omega (\alpha, \beta) = - \langle \tilde{u}_{qq}, \tilde{u}_{q} \rangle \alpha^2 + \langle \tilde{u}_{q\nu}t, \tilde{u}_{\nu}t \rangle \beta^2 - 2\langle \tilde{u}_{q\nu}t, \tilde{u}_{q} \rangle\alpha\beta, \text{ for } \alpha, \beta \in \R
\end{equation}
is positive semidefinite.
\end{lemma}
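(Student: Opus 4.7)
The plan is to recognize $\omega(\alpha,\beta)$ as $\frac{1}{|Q_u|^2}\langle \partial^2 Q(\tilde u)\,w,\,w\rangle$ with $w := \alpha\,\tilde u_q + \beta\,\tilde u_\nu t$, and then invoke convexity of $Q$ to conclude. Two structural facts drive the calculation. First, \eqref{dotu} says that $\tilde u_q$ is a function of $\tilde u$ alone, namely $\tilde u_q = V(\tilde u)$ with $V(u):=Q_u(u)/\langle Q_u(u),Q_u(u)\rangle$; this forces $\tilde u_{qq} = V_u(\tilde u)\,\tilde u_q$ and, by equality of mixed partials, $\tilde u_{q\nu}t = V_u(\tilde u)\,\tilde u_\nu t$. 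Second, differentiating \eqref{idqu} in $q$ and in $\nu$ (direction $t$) yields $\langle Q_u(\tilde u),\tilde u_q\rangle = 1$ and $\langle Q_u(\tilde u),\tilde u_\nu t\rangle = 0$; combined with the identity $Q_u = |Q_u|^2\,\tilde u_q$ (equivalent to \eqref{dotu}), these will let me trade $\tilde u_q$-factors for $Q_u$-factors at the cost of a factor $|Q_u|^{-2}$.

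Next, I would compute each of the three coefficients in \eqref{quadratic} separately. Differentiating $\langle Q_u,\tilde u_q\rangle = 1$ once more in $q$ gives $\langle \partial^2 Q\,\tilde u_q,\tilde u_q\rangle + \langle Q_u,\tilde u_{qq}\rangle = 0$, and hence $-\langle \tilde u_{qq},\tilde u_q\rangle = |Q_u|^{-2}\,\langle \partial^2 Q\,\tilde u_q,\tilde u_q\rangle$. Similarly, differentiating $\langle Q_u,\tilde u_\nu t\rangle = 0$ in $q$ yields $-\langle \tilde u_{q\nu}t,\tilde u_q\rangle = |Q_u|^{-2}\,\langle \partial^2 Q\,\tilde u_q,\tilde u_\nu t\rangle$. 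For the remaining term I would use the explicit formula
\[
V_u = |Q_u|^{-2}\,\partial^2 Q - 2|Q_u|^{-4}\,Q_u\otimes(\partial^2 Q\,Q_u);
\]
since $\langle Q_u,\tilde u_\nu t\rangle = 0$, the rank-one correction is annihilated when paired with $\tilde u_\nu t$, and we obtain $\langle \tilde u_{q\nu}t,\tilde u_\nu t\rangle = |Q_u|^{-2}\,\langle \partial^2 Q\,\tilde u_\nu t,\tilde u_\nu t\rangle$. Substituting back into \eqref{quadratic} and using symmetry of $\partial^2 Q$ produces the claimed quadratic-form representation, and convexity of $Q$ then closes the argument.

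The only delicate point is the cross coefficient $\langle \tilde u_{q\nu}t,\tilde u_\nu t\rangle$: the rank-one correction in $V_u$ threatens to introduce an extraneous term that would destroy the quadratic-form structure, but it is precisely killed by the orthogonality $\langle Q_u,\tilde u_\nu t\rangle = 0$, which is itself a direct consequence of the level-set identity $Q(\tilde u(q;\nu)) = q$. Everything else is bookkeeping on differentiated identities.
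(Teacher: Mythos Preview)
Your proof is correct and rests on the same differentiated identities as the paper's: both start from $Q(\tilde u(q;\nu))=q$, differentiate to obtain $\langle Q_u,\tilde u_q\rangle=1$ and $\langle Q_u,\tilde u_\nu t\rangle=0$, and then differentiate once more to express each coefficient of $\omega$ as (a positive multiple of) an entry of the Hessian form $\langle Q_{uu}\cdot,\cdot\rangle$ evaluated on $\tilde u_q$ and $\tilde u_\nu t$.

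The packaging differs slightly. The paper, after recording $-\langle\tilde u_{qq},\tilde u_q\rangle\ge 0$ and $\langle\tilde u_{q\nu}t,\tilde u_\nu t\rangle\ge 0$, invokes the Cauchy--Schwarz inequality for the positive semidefinite form $Q_{uu}$ to bound the cross term and so control the discriminant of $\omega$. You instead assemble the three coefficients into the single identity
\[
\omega(\alpha,\beta)=|Q_u|^{-2}\,\big\langle Q_{uu}(\alpha\,\tilde u_q+\beta\,\tilde u_\nu t),\ \alpha\,\tilde u_q+\beta\,\tilde u_\nu t\big\rangle,
\]
from which positive semidefiniteness is immediate by convexity of $Q$; the Schwarz step is then implicit rather than separate. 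Your derivation of the $\beta^2$-coefficient via the explicit Jacobian $V_u=|Q_u|^{-2}\partial^2Q-2|Q_u|^{-4}Q_u\otimes(\partial^2Q\,Q_u)$ (with the rank-one piece killed by $\langle Q_u,\tilde u_\nu t\rangle=0$) is a clean alternative to the paper's route through $\tilde u_{\nu\nu}(t,t)$, but it leads to the same formula. In short: same engine, slightly tighter chassis.
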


\begin{proof}
By differentiating the identity
\begin{equation}\label{idqu}
Q(\tilde{u}(q;\nu)) = q,
\end{equation}
with respect to $q$, we obtain
\begin{equation}\label{idqu-one}
\langle Q_u, \tilde{u}_q \rangle = 1.
\end{equation}
On the other hand, differentiating \eqref{idqu} with respect to $\nu$ in direction $t$, we obtain, using also \eqref{dotu},
\begin{equation}\label{diff-nu-1}
\langle Q_u, \tilde{u}_{\nu}t \rangle = 0 \Leftrightarrow \langle \tilde{u}_q, \tilde{u}_\nu t \rangle = 0,
\end{equation}
and differentiating once more gives
\begin{equation}\label{diff-nu-2}
\langle \tilde{u}_{q\nu}t, \tilde{u}_\nu t \rangle + \langle \tilde{u}_{q}, \tilde{u}_{\nu\nu}(t,t)\rangle = 0.
\end{equation}
Now, differentiating \eqref{idqu-one} with respect to $q$ yields, via \eqref{dotu},
\begin{subequations}\label{diffs}
\begin{equation}\label{diffs-1}
\langle Q_{uu} \tilde{u}_{q}, \tilde{u}_{q} \rangle + \langle Q_u, \tilde{u}_{qq} \rangle = 0 \Leftrightarrow \frac{\langle \tilde{u}_{qq}, \tilde{u}_q \rangle}{\langle \tilde{u}_q, \tilde{u}_{q}\rangle} = - \langle Q_{uu} \tilde{u}_q, \tilde{u}_q \rangle,
\end{equation}
while differentiating with respect to $\nu$ in direction $t$ yields
\begin{equation}\label{diffs-2}
\langle Q_{uu} \tilde{u}_{\nu}t, \tilde{u}_{q} \rangle + \langle Q_u, \tilde{u}_{q\nu}t\rangle = 0 \Leftrightarrow \frac{\langle \tilde{u}_{q\nu}t, \tilde{u}_q \rangle}{\langle \tilde{u}_q, \tilde{u}_{q}\rangle} = - \langle Q_{uu} \tilde{u}_{\nu}t, \tilde{u}_{q} \rangle.
\end{equation}
Finally, differentiating \eqref{diff-nu-1} with respect to $\nu$ yields, using also \eqref{diff-nu-2},
\begin{equation}\label{diffs-4}
\begin{split}
\langle Q_{uu} \tilde{u}_{\nu}t, &\tilde{u}_{\nu}t \rangle +
\langle Q_u, \tilde{u}_{\nu\nu}(t,t)\rangle = 0 \Leftrightarrow\\
&\frac{\langle \tilde{u}_{q\nu}t, \tilde{u}_\nu t \rangle}{\langle
\tilde{u}_q, \tilde{u}_{q}\rangle}=- \frac{\langle
\tilde{u}_{\nu\nu}(t,t), \tilde{u}_q \rangle}{\langle
\tilde{u}_q, \tilde{u}_{q}\rangle} =  \langle
Q_{uu} \tilde{u}_{\nu}t, \tilde{u}_{\nu}t \rangle.
\end{split}
\end{equation}
\end{subequations}
The convexity of $Q$ implies
\begin{equation}\label{q-conv}
\langle Q_{uu} v, v \rangle \geq 0, \text{ for all } v \in \R^n.
\end{equation}
From this and \eqref{diffs-4}, we obtain
\begin{equation}\label{inner1}
\langle \tilde{u}_{q\nu}t, \tilde{u}_{\nu}t \rangle \geq 0,
\end{equation}
while from \eqref{q-conv} and \eqref{diffs-1} we obtain
\begin{equation}\label{inner2}
- \langle \tilde{u}_{qq}, \tilde{u}_{q}\rangle \geq 0.
\end{equation}
From \eqref{q-conv}, by the same argument that proves the Schwarz inequality, we have
\begin{equation}\label{schwarz}
\langle Q_{uu}v, w \rangle^2 \leq \langle Q_{uu}v, v \rangle \langle Q_{uu}w, w \rangle, \text{ for all } v,w \in \R^n.
\end{equation}
Thus, from \eqref{diffs} and \eqref{schwarz}, it follows,
\[ - \langle \tilde{u}_{qq}, \tilde{u}_{q} \rangle \langle \tilde{u}_{q\nu}t, \tilde{u}_{\nu}t \rangle - \langle \tilde{u}_{q\nu}t, \tilde{u}_{q} \rangle^2 \geq 0,\]
which, together with \eqref{inner1} and \eqref{inner2}, concludes the proof.
\end{proof}

\begin{lemma}\label{thre-two}
Assume that $b>0$ and that $u\in\mathcal{U}^{\rm Pos}$  satisfy the following.
\begin{enumerate}
\item The set $A_b \subset D_R$  defined by $A_b: =\{x\in D_R \mid q^u >b\}$ is open,
\item $q^u \in L^\infty(A_b)$ and  $\nu^u : \overline{A_b} \to \mathbb{S}^{n-1}$ is $C^1$ smooth.
\end{enumerate} 
Moreover, let $F: \overline{A_b} \times \R \times \R^n \to \R$ be the function defined by 
\begin{equation}\label{calk1}
F(x,q,z) := 
\frac{1}{2}\bigg\{ \langle \tilde{u}_q(q; \nu^u), \tilde{u}_q(q, \nu^u) \rangle |z|^2 + \sum_{j=1}^{n} \langle \tilde{u}_\nu(q; \nu^u) \nu_{x_j}^u, \tilde{u}_\nu(q, \nu^u) \nu_{x_j}^u \rangle \bigg\},
\end{equation}
for $x \in \overline{A_b}$, $z\in\R^n$, and $q\geq 0$, while for $q < 0$ let
\[ F(x,q,z) := F(x,-q,z). \]

Then, the functionals $\mathcal{K}_{A_b}$ and $\mathcal{E}_{A_b} := \mathcal{K}_{A_b} + \mathcal{V}_{A_b}$, where
\begin{align}\label{calk}
\mathcal{K}_{A_b}(\rho) &:=  \int_{A_b} F(x,\rho,\nabla\rho) \dd x,\\
\mathcal{V}_{A_b} (\rho) &:= \int_{A_b} V(\vert\rho\vert, \nu^u) \dd x,
\end{align}
admit a nonegative minimizer $\rho \in W^{1,2}(A_b) \cap L^\infty(A_b)$ that satisfies the Dirichlet condition $ \rho  = q^u$, for $x\in\partial A_b$ and the invariance condition
\begin{equation}
\rho(g x)=\rho(x), \text{ for } x\in A_b,\, g\in G_{A_b}.
\end{equation}
    
\end{lemma}
\begin{proof}
The smoothness of $\nu^u$ implies that the function $F$ defined in \eqref{calk1} is continuous on $\overline{A_b} \times \R \times \R^n$ and convex in $z$ for each fixed $(x,q) \in \overline{A_b} \times \R$. From this and the boundary condition it follows that $F$ satisfies all assumptions in Theorems 4.5, 4.6 in \cite{giu}. Therefore, the existence of a minimizer $\rho \in W^{1,2}(A_b)$ follows from Theorem 4.6 in \cite{giu}. To show that a minimizer $\rho$ of $\mathcal{K}_{A_b}$ is in $L^\infty(A_b)$ we set $\rho^- :=\min \{\rho,\left\| q^u \right\|_{L^\infty(A_b)}\}$ and observe that
\[
\nabla\rho^- = 0,\text{ on } \{\rho>\rho^-\}
\]
and
\[
\langle \tilde{u}_{q\nu}(q, \nu^u) \nu_{x_j}^u, \tilde{u}_\nu(q, \nu^u) \nu_{x_j}^u \rangle\geq 0\quad (\text{from }\eqref{diffs-4})
\]
imply 
\[
\mathcal{K}_{A_b}(\rho^-)\leq\mathcal{K}_{A_b}(\rho).
\]
The $L^\infty$ bound for a minimizer $\rho$ of $\mathcal{E}_{A_b}$ follows from assumption \eqref{v-pos-1} and a similar argument. Finally, the evenness of $F$ and of $V(\vert\cdot\vert,\nu^u)$ in $q$ imply we can assume $\rho\geq 0$.
\end{proof}

\section{The comparison function $\sigma$}\label{comparison-function}
We prove three lemmas leading to the construction of a map $\sigma$ that we use systematically as a comparison function in the proof of Theorem \ref{theorem1}. We let $\chi_A$ be the characteristic function of a set $A$.
 
Given numbers $l, \lambda>0$, set $L=l+\lambda$ and let
$\varphi=\chi_{\overline{B_l}}\varphi_1+\chi_{\overline{B_L}\setminus\overline{B_l}}\varphi_2$,
where $\varphi_1:\overline{B_l}\to\R$, $\varphi_2:\overline{B_L}\setminus
B_l\to\R$ are defined by
\begin{equation}\label{final-comp}
\begin{cases}
\Delta \varphi_1 = c^2\varphi_1, &\text{in } B_l,\smallskip\\
\varphi_1 = \bar q, &\text{on } \partial B_l,
\end{cases}
\end{equation}
and
\begin{equation}\label{quasifinal-comp}
\begin{cases}
\Delta \varphi_2 = 0, &\text{in } B_L\setminus\overline{B_l},\smallskip\\
\varphi_2 = \bar q, &\text{on } \partial B_l,\smallskip\\
\varphi_2 = \overline{Q}, &\text{on } \partial B_L,
\end{cases}
\end{equation}
where $c$, $\bar q$, and $M$ below are the constants defined in Hypotheses \ref{h1} and \ref{h2} and
\begin{equation}\label{quasifinal-comp5}
\overline{Q} = \max_{u\in\overline{D},\, \vert u\vert\leq M}Q(u),
\end{equation}
(see Hypothesis \ref{h4}). The map $\varphi$ is radial, that is, $\varphi_j(x)=\phi_j(\vert x \vert)$, for $j=1,2.$ Classical properties of Bessel functions imply that  $\phi_1 : [0,l] \to \R$ is positive and increasing together with the first derivative $\phi_1^\prime$. The function $\phi_2:[l,L]\to\R$ is increasing with decreasing first derivative $\phi_2^\prime$, by explicit calculation.

\begin{lemma}\label{lemma-phi1}
The following hold.
\begin{enumerate}
\item The function $\phi_1^\prime(l)$ is strictly increasing for $l\in(0,+\infty)$ and
\begin{equation}\label{phi1-limit}
      \lim_{l\to+\infty}\phi_1^\prime(l) = c\bar q.
    \end{equation}
\item There exists a strictly increasing function $h:(0,+\infty)\to(0,+\infty)$ such that
\begin{equation}\label{ex-bound}
\phi_1(r) \leq \mathrm{e}^{h(l)(r-l)}\phi_1(l),\text{ for } r\in[0,l],
\end{equation}
and $\lim_{l\to+\infty} h(l) = c$.
\item There is a constant $C_0$, independent of $l$, such that
\begin{equation}
\phi_1^{\prime\prime}(r) \leq  C_0,\text{ for } r\in[0,l].
\end{equation}
\end{enumerate}
\end{lemma}

\begin{proof}
(i) and (ii) are proved in \cite[Lemma 2.4]{flp}. From the bound provided by \eqref{ex-bound} for $\phi_1$ and standard arguments it follows that
\begin{equation}
\phi_1^{\prime\prime}(r)\leq C_0,\text{ for } r\in[0,\min\{l,1\}].
\end{equation}
If $l>1,$ from the proof of Lemma 2.4 in \cite{flp}, it follows that $\phi_1^\prime(r)\leq C$, for $r\in[1,l]$, where $C$ is a constant independent of $l$. This together with inequality \eqref{ex-bound} 
imply
\begin{equation}
\phi_1^{\prime\prime}(r)\leq C_0,\text { for } r\in[1,l],\, l>1. \qedhere
\end{equation}
\end{proof}

An explicit computation yields, for $r\in[l,L],$
\begin{equation}\label{phi2-derivative}
\phi_2^\prime(r)=
\begin{cases}
\dfrac{\overline{Q}-\bar q}{r\log(L/l)}, &\text{for } n=2,\medskip\\
(n-2)\dfrac{l^{n-2}(\overline{Q}-\bar q)}{r^{n-1}(1-(l/L)^{n-2})}, &\text{for } n>2.
\end{cases}
\end{equation}

\begin{lemma}\label{lemma-phi2}
The following hold.
\begin{enumerate}
\item Let the ratio $l/L$ be fixed. Then,
\begin{equation}\label{phi2-limit}
\lim_{l\to+\infty}\phi_2^\prime(l) = 0.
\end{equation}
\item Let the difference $L-l=\lambda$ be fixed. Then, $\phi_2^\prime(l)$ is a decreasing function of $l\in(0,+\infty)$ and
\begin{equation}\label{phi2-lim}
\lim_{l\to+\infty}\phi_2^\prime(r) = \frac{\overline{Q}-\bar q}{\lambda},\text{ for } r\in[l,l+\lambda].
\end{equation}
Moreover, there exists a constant $C_0$, independent of $l\in[1,+\infty)$, such that
\begin{equation}
\vert \phi_2^{\prime\prime}(r)\vert \leq \frac{C_0}{l},\text{ for } r\in[l,l+\lambda].
\end{equation}
\end{enumerate}
\end{lemma}

\begin{proof}
(i) is a straightforward consequence of \eqref{phi2-derivative}. We prove (ii) for $n>2.$ The case $n=2$ is similar. To show that $\phi_2^\prime(l)$ is decreasing, we prove that the map $f(l) = l (1- (l / (l+\lambda))^{n-2})$ is increasing. Setting $\xi= l / (l+\lambda)$ we have
\[
f^\prime(l) = d(\xi) := 1-(n-1)\xi^{n-2}+(n-2)\xi^{n-1},\text{ for } \xi\in[0,1),
\]
and $f^\prime(l)>0$, for $l\in(0,+\infty)$, follows from $d(0)=1$, $d(1)=0$, and $d^\prime(\xi)<0$, for $\xi\in(0,1)$. The limit \eqref{phi2-lim} follows from \eqref{phi2-derivative}. The last statement of the lemma follows from
\[ 
\phi_2^{\prime\prime}(r)=-(n-1)\frac{l^{n-1}}{r^n}\phi_2^\prime(l). \qedhere
\]
\end{proof} 

Let $\varphi$ be as before and let $\delta>0$ be a small number. Denote by $\vartheta : B_{l+\delta}\setminus\overline{B_{l-\delta}} \to \R$ the solution of the problem
\begin{equation}\label{small-comp}
\begin{cases}
\Delta \vartheta = 0, &\text{in } B_{l+\delta}\setminus\overline{B_{l-\delta}},\smallskip\\
\vartheta = \varphi, &\text{on } \partial (B_{l+\delta}\setminus
\overline{B_{l-\delta}}).
\end{cases}
\end{equation}
We have $\vartheta(x)=\theta(\vert x\vert))$, where $\theta : [l-\delta, l+\delta] \to \R$ satisfies
\begin{equation}\label{theta-derivative}
\theta^\prime(r)=
\begin{cases}
\dfrac{\phi_2(l+\delta)-\phi_1(l-\delta)}{r\log\frac{l-\delta}{l-\delta}}, &\text{for } n=2,\bigskip\\ 
(n-2)\dfrac{(l-\delta)^{n-2}(\phi_2(l+\delta)-\phi_1(l-\delta))}{r^{n-1}(1-(\frac{l-\delta}{l+\delta})^{n-2})}, &\text{for } n>2.
\end{cases}
\end{equation}

\begin{figure}
\begin{center}
\begin{picture}(0,0)%
\includegraphics{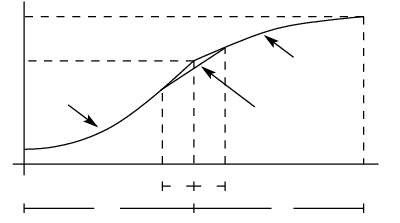}%
\end{picture}%
\setlength{\unitlength}{4144sp}%
%
\begin{picture}(3038,1698)(943,-172)
\put(943,1373){$b$}%
\put(3046,-108){$\lambda$}%
\put(1735,-108){$l$}%
\put(1258,782){$\varphi_1$}%
\put(2926,600){$\vartheta$}%
\put(3221,979){$\varphi_2$}%
\put(3896,234){$r$}%
\put(2490, 52){$\delta$}%
\put(2258, 52){$\delta$}%
\put(943, 65){$0$}%
\put(943,1035){$\bar{q}$}%
\end{picture}%
\caption{The comparison functions $\varphi_1$, $\varphi_2$, and $\vartheta$.}
\label{figure:figure1}
\end{center}
\end{figure}

\begin{lemma}\label{lemma-sigma}
There exist positive constants $l_0,\, \lambda,\, \delta,\, \bar q^\prime < \bar{q},\, \delta^\prime,\, \mu$, such that $l\geq l_0$, $L=l+\lambda$ implies
\begin{enumerate}
\item $\phi_1^\prime(l)>\phi_2^\prime(l)+\mu,$
\item $\vartheta<\varphi, \text{ in } B_{l+\delta}\setminus\overline{B_{l-\delta}}$,
\item The map $\sigma: \overline{B_L}\to\R$ defined by $\sigma=\chi_{B_{l-\delta}\cup (\overline{B_L}\setminus \overline{B_{l+\delta}})}\varphi+\chi_{\overline{B_{l+\delta}}\setminus B_{l-\delta}}\vartheta$ satisfies
\begin{equation}\label{sigma-below}
\sigma\leq \bar q^\prime<\bar q, \text{ in } \overline{B_{l+\delta^\prime}}.
\end{equation}
\end{enumerate}
\end{lemma}

\begin{proof}
Letting the ratio $\rho= l/L$ be fixed, then \eqref{phi1-limit} and \eqref{phi2-limit} imply that there is an $l_0$ such that (i) holds for $l=l_0$ and some $\mu>0$. Fixing $\lambda=l_0 ((\mu/\rho)-1)$, then (i) holds for all $l\geq l_0$. This follows from Lemmas \ref{lemma-phi1} and Lemma \ref{lemma-phi2} (ii), which imply that $\phi_1^\prime(l)$ is increasing and $\phi_2^\prime(l)$ is decreasing for fixed $\lambda$. From \eqref{theta-derivative}, the relation
\[ 
\phi_2(l+\delta)-\phi_1(l-\delta)=(\phi_2^\prime(l)+\phi_1^\prime(l))\delta+o(\delta),
\]
which holds uniformly in $l$ since $\phi_1(l)=\phi_2(l)=\bar{q}$, and 
\begin{align*} 
\log\frac{l+\delta}{l-\delta} &= 2\frac{\delta}{l}+o(\delta),\\
\left( \frac{l-\delta}{l+\delta} \right)^{n-2} &= 1-2(n-2)\frac{\delta}{l}+o(\delta),
\end{align*}
it follows that
\begin{align}\label{theta-phi}
\left\vert \theta^\prime(r)-\frac{1}{2}(\phi_2^\prime(l)+\phi_1^\prime(l)) \right\vert &\leq C\delta, \text{ for } r\in[l-\delta,l+\delta],\\
\vert\theta^{\prime\prime}\vert &\leq \frac{C}{l}, \text{ for } r\in[l-\delta,l+\delta] 	
\end{align}
for some constant $C>0$, independent of $l\in[l_0,+\infty)$. From (i) and \eqref{theta-phi}, and the bounds on $\phi_1^{\prime\prime}$, $\phi_2^{\prime\prime}$, $\theta^{\prime\prime}$, it follows that there is a small $\delta>0$, independent of $l\in[l_0,+\infty)$, such that
\begin{equation*}\label{theta-phi2}
\begin{cases}
\theta^\prime(r) < \phi_1^\prime(r),\text{ for } r\in[l-\delta,l],\smallskip\\
\theta^\prime(r) > \phi_2^\prime(r),\text{ for } r\in[l,l+\delta].
\end{cases}
\end{equation*}
This and $\theta(l-\delta)=\phi_1(l-\delta)$, $\theta(l+\delta)=\phi_2(l+\delta)$, prove (ii). The existence of the number $\bar q^\prime < \bar q$ and $0 < \delta^\prime < \delta$, independent of $l\in[l_0,+\infty)$, follows by the same arguments and from the existence of the limits \eqref{phi1-limit} and \eqref{phi2-lim}.
\end{proof}

\section{The replacement lemmas}\label{replacement}
We divide this section into two parts. In the first part we give conditions on a set $A\subset D_R$ which allow for a map defined on $A$ to be extended to an equivariant map defined on $B_R$. In particular, we analyze the case where $A$ is a ball $B_{x,r}$ and show that, except for a neighborhood of $\partial D_R$, $D_R$ can be covered by balls $B_{x,r}$, with $r \geq L_0 = l_0 + \lambda$, that satisfy the condition ensuring the possibility of equivariant extension. These results are utilized in the second part where we prove Proposition \ref{sigmacomparison} and Proposition \ref{q-comparison} that are basic for showing that $u_R$ satisfies \eqref{bo1} with the sign of strict inequality.

\subsection{Equivariant extension and the set $\Omega^R$}
Let $\Gamma\subset G$ and $\pi_\gamma,\,\gamma \in \Gamma$, and $G_A$ as in Section \ref{alg-pre}. We let $\Gamma_A=\Gamma\cap G_A.$ For $x\in\R^n$, we set $G_x=G_{\{x\}}$, $\Gamma_x=\Gamma_{\{x\}}$. $G_x$ coincides with $\mathrm{Stab}[\{x\}]$ and it is generated by $\Gamma_x$ (see \cite{hu}).

\begin{lemma}\label{algebra}
Let $A$ be an open and connected subset of $\R^n.$ Assume that for all $\gamma \in \Gamma$,
\begin{equation}\label{a-cond}
\gamma A\cap A\neq\varnothing \quad\text{implies}\quad \gamma A=A.
\end{equation}
Then, the following hold. 
\begin{enumerate}
\item For all $g \in G$ 
\begin{equation}\label{ar-cond}
g A\cap A\neq\varnothing \quad\text{implies}\quad g A=A.
\end{equation}
\item $G_A$ is the reflection group generated by  
\begin{equation}
\Gamma_A^* = \{ \gamma\in\Gamma \mid A \cap\pi_\gamma \neq \varnothing \}.
\end{equation}
\end{enumerate}
\end{lemma}
 
\begin{proof} For each pair of fundamental regions $F_a$, $F_b$, there is a unique $g \in G$ that satisfies
\begin{equation}
g F_a=F_b.
\end{equation}
Therefore, if $F_i$, for $1\leq i\leq N$, are the distinct fundamental regions with the property that $A_i = A \cap F_i \neq \varnothing$, there is a unique $g_i\in G$ such that $g_i F_1=F_i$.

\medskip \noindent {\em Step 1}. There exist $\gamma_j\in\Gamma_A^*$, for $1\leq j\leq M$, such that $g_i=\gamma_M\cdots\gamma_1$. Since $A$ is connected, given $x_i \in A_i$, for $1\leq i\leq N$, there is an arc $[0,1] \ni s\to x(s) \in A$, such that $x(0) = x_1$, $x(1)=x_i$. Since $A$ is open, by slightly deforming $x(s)$ if necessary, we can assume that there are sequences $s_j$, for $1\leq j\leq M$, and $A_{i_j}$, for $1\leq j\leq M+1$, such that
\begin{align}
x(s) &\in A_{i_j}, \text{ for } s_{j-1} < s <s_j,\text{ and } 1\leq j\leq M+1,\\
x(s_j) &\in \pi_{\gamma_j}, \text{ for } 1\leq j\leq M, 
\end{align}
where $s_0 = 0$, $s_{M+1} = 1$, and where $\gamma_j$ is the reflection associated to the plane $\pi_{\gamma_j}$ on the common boundary between $F_{i_j}$ and $F_{i_{j+1}}$. This shows that $g_i = \gamma_M \cdots \gamma_1$ and, therefore, that $g_i$ belongs to the group generated by $\Gamma_{A}^{*}$.

\medskip \noindent {\em Step 2}. We now prove that $g=\gamma_M \cdots \gamma_1$, with $\gamma_j\in\Gamma_A^*$, for $1\leq j\leq M$, is a necessary and sufficient condition in order that $g A=A$. From the definition of $\Gamma_A^*$ it is plain that the condition is sufficient. On the other hand, $g A=A$ implies $g F_h=F_k$, for some $1\leq h$, $k\leq N$, and therefore, by Step 1, we have that $g=g_k g_{h}^{-1}$ is the product of reflections in $\Gamma_A^*$.

\medskip \noindent {\em Step 3}. To complete the proof of (i) we show that $g F_i\cap A=\varnothing$ implies $g A\cap A=\varnothing$. Indeed, if this is not the case, there exist $F_h$, $F_k$, such that $g F_h = F_k$. It follows that $g = g_k g_{h}^{-1}$ and therefore, by Step 1, $g F_i = g_k g_{h}^{-1} F_i = F_j$, for some $1\leq j\leq N$, in contradiction with the assumption.
 \end{proof} 

We denote by $\Pi$ the union of all planes $\pi_\gamma$ of all reflections $\gamma\in G$ and define
\begin{equation}\label{plane-invariant-x}
\Pi_x = \Pi \setminus \tilde{\Pi}_x, \text{ where }\tilde{\Pi}_x = \cup_{\gamma\in\Gamma\setminus\Gamma_x} \pi_\gamma.
\end{equation}
Note that $\tilde\Pi_x$ is the union of the planes of the reflections that do not fix $x$. 

\begin{lemma}\label{invariance}
Let $A$ be a subset of $\R^n$ and $v : A \to \R^n$ a map that satisfy the following conditions.
\begin{enumerate}
\item For all $g\in G$, $g A\cap A\neq\varnothing$ implies $g A=A$.
\item There holds $v(g x)=g v(x)$, for all $x\in A$, $g\in G_A$. 
\end{enumerate}
Then, 
\begin{equation}\label{vtilde}
\tilde v(x) = g v(g^{-1}x),\text{ for all } x\in g A,\ g\in G,
\end{equation}
extends $v$ to an equivariant map $\tilde v:\tilde A\to\R^n$, where $\tilde{A}=\cup_{g\in G}g A$. 
\end{lemma}

\begin{proof}
We first prove that $\tilde v$ is well defined. Assume $x=g_1 x_1=g_2 x_2$, for some $x_1, x_2\in A$ and $g_1, g_2\in G$. Then, we have $x_2=g_2^{-1} g_1 x_1$ and, therefore, $g_2^{-1} g_1 A \cap A \neq \varnothing$, which implies $g_2^{-1} g_1 A = A$ by (i). Thus, $g_2^{-1} g_1\in G_A$ and (ii) yields that $g_2^{-1} g_1v(x_1) = v(x_2)$. From this and the definition \eqref{vtilde} of $\tilde v$, we conclude that
\begin{equation}
\tilde v(x) = g_1 v(g_1^{-1}x) = g_1 v(x_1) = g_2 v(x_2) = g_2 v(g_2^{-1}x) = \tilde v(x).
\end{equation} 
To prove that $\tilde v$ is equivariant, given $x \in \tilde{A}$ and $g\in G$, from \eqref{vtilde} we have that $\tilde v(x) = g_1 v(x_1)$, $\tilde v(gx)=g_2 v(x_2)$, for some $x_1, x_2\in A$ and $g_1, g_2\in G$, such that $x = g_1 x_1$, $g x=g_2x_2$. Therefore, arguing as before, we deduce $v(x_2) = g_2^{-1}g g_1 v(x_1)$ and conclude that
\begin{equation}
\tilde v(gx)=g_2 v(x_2)=g g_1 v(x_1)=g\tilde v(x). \qedhere
\end{equation}
\end{proof}

The following corollary concerns the particular case where $A$ is a ball.
 \begin{corollary}\label{rad-map-cond}
Assume that the ball $B_{x,r}$ satisfies the condition
\begin{equation}\label{bintersection}
B_{x,r}\cap\tilde\Pi_x=\varnothing.
\end{equation}
Let $\alpha : B_{x,r}\to\R$ be a scalar function that depends only on the distance from the center $x$ of $B_{x,r}$ and $w:B_{x,r}\to\R^n$ be a map that satisfies condition (ii) in Lemma \ref{invariance}. Then, \eqref{vtilde} extends the product $v=\alpha w:B_{x,r}\to\R^n$ to an equivariant map $\tilde v:\cup_{g\in G}gB_{x,r}\to\R^n.$
\end{corollary}

\begin{proof}
Since it results $\gamma B_{x,r}=B_{x,r}$, for all $\gamma\in G_x$, the ball $B_{x,r}$ satisfies \eqref{a-cond} in Lemma \ref{algebra} if and only if it has empty intersection with $\pi_\gamma$, for all $\gamma\in\Gamma\setminus\Gamma_x$. From this and Lemma \ref{algebra} it follows that \eqref{bintersection} is a necessary and sufficient condition in order that $A=B_{x,r}$ satisfies condition (i) in Lemma \ref{invariance}. From the assumptions on $\alpha$ and $w$ it is obvious that $v$ satisfies (ii).
\end{proof}

\begin{lemma}\label{d-exists} Let $l_0$ and $\lambda$ be as in Lemma \ref{lemma-sigma}. There exist $d>0$ and $R_0>0$ such that, if $R\geq R_0$, then, for each $x\in D_R$ that satisfies
\begin{equation}\label{bigger-d}
d(x,\partial D_R) \geq d,
\end{equation}
there are $\hat x\in D_R$, for $L\geq L_0 = l_0+\lambda$, such that
\begin{enumerate}
\item $B_{\hat x,L}\subset D_R,$
\item $B_{\hat x,L}\cap\tilde\Pi_{\hat x}=\varnothing,$
\item $x\in B_{\hat x,L-\lambda}.$
\end{enumerate}
\end{lemma}

\begin{proof}
Assume the lemma is false. Then, there are sequences $R_j$, for $x_j \in D_{R_j}$, $1\leq j\leq\ldots$, such that
\begin{equation}\label{lim-rj}
\begin{cases}
\lim_{j\to +\infty}R_j = +\infty,\smallskip\\
\lim_{j \to +\infty} d_j := d(x_j, \partial D_{R_j}) = +\infty,
\end{cases}
\end{equation}
and
\[
B_{\hat x,L}\cap\tilde\Pi_{\hat x}\neq\varnothing, \text{ for all } \hat{x},\, L \text{ such that } L\geq L_0,\,B_{\hat x,L}\subset D_{R_j},\, \vert x_j-\hat x\vert<L-\lambda.
\]

By passing to a subsequence, we can assume that, for each $\gamma\in\Gamma_{a_1}=\Gamma_D$ there exists $\alpha_\gamma\in[0,+\infty]$ such that 
\begin{equation}\label{lim-xi}
\lim_{j\to+\infty}\frac{d(x_j,\pi_\gamma)}{d(x_j,\partial D_{R_j})}=\alpha_\gamma.
\end{equation}

We distinguish two cases.

\medskip \noindent {\em Case 1}. Let $\alpha_\gamma>0$, $\gamma\in\Gamma_{a_1}$. Then, provided $j$ is sufficiently large, \eqref{lim-rj} and \eqref{lim-xi} imply
\begin{equation}\label{case1}
d(x_j, \pi_\gamma) > \frac{1}{2} \bar{\alpha} d_j > L_0, \text{ for } \gamma\in\Gamma_{a_1},
\end{equation}
where $\bar{\alpha} := \min\{\min\{1,\alpha_\gamma\} \mid \alpha_\gamma>0, \text{ for } \gamma\in\Gamma_{a_1}\}$. This shows that the ball $B_{x_j,\frac{1}{2}\bar\alpha d_j}\subset D_{R_j}$ has empty intersection with $\Pi$, in contradiction with the assumptions on the sequences $\{R_j\}$, $\{x_j\}$.

\medskip \noindent {\em Case 2}. Let $\alpha_\gamma=0$, for some $\gamma\in\Gamma_{a_1}$. Let $\pi^0=\cap_{\alpha_\gamma=0} \pi_\gamma$ and let $\xi_j \in \pi^0$ be the orthogonal projection of $x_j$ on $\pi^0$. Then, there is a constant $C>0$ such that 
\begin{equation}\label{d-xj-xi}
\vert x_j - \xi_j \vert \leq C \max_{\alpha_\gamma=0} d(x_j,\pi_\gamma) \leq C d_j\alpha_j^0,
\end{equation}
where 
\[
\alpha_j^0 := \max_{\alpha_\gamma=0} \frac{d(x_j,\pi_\gamma)}{d_j} \to 0, \text{ as } j\to+\infty. 
\]
Therefore, if $\bar\gamma \in \Gamma_{a_1}$ has $\alpha_{\bar\gamma} > 0$, we obtain, for $j$ sufficiently large,
\begin{align}
d(\xi_j,\pi_{\bar\gamma}) \geq d(x_j,\pi_{\bar\gamma})-\vert x_j-\xi_j\vert \geq d_j \left( \frac{1}{2}\alpha_{\bar\gamma}-C\alpha_j^0 \right) > \frac{1}{4}\bar\alpha d_j, \label{case2-1}\\
d(\xi_j,\partial D) \geq d(x_j,\partial D)-\vert x_j-\xi_j\vert  \geq d_j(1- C\alpha_j^0) > \frac{1}{2}d_j.\label{case2-2}
\end{align}

From \eqref{d-xj-xi} and \eqref{case2-1}, \eqref{case2-2}, it follows that, for $j$ sufficiently large, 
$x_j\in B_{\xi_j,\frac{1}{4}\bar\alpha d_j-\lambda}$, the ball $B_{\xi_j,\frac{1}{4}\bar\alpha d_j}$ is contained in $D_{R_j}$  and has empty intersection with $\tilde\Pi_{\xi_j}=\cup_{\gamma\in\Gamma\setminus\Gamma_{\xi_j}} \pi_\gamma$. This is in contradiction with the assumptions on $\{R_j\}$, $\{x_j\}$.
\end{proof}

Assume $R\geq R_0$, with $R_0$ as in Lemma \ref{d-exists} and let
\begin{equation}
 \aleph^R = \{ (x,L) \mid L\geq L_0,\, B_{x,L}\subset D_R,\, B_{x,L}\cap\tilde\Pi_x=\varnothing \}.
\end{equation}
From Lemma \ref{d-exists} and the compactness of the set $\{ x \in D_R \mid d(x,\partial D_R) \geq d \}$ it follows that there is a number $K$ and $(\hat{x}_j, L_j) \in \aleph^R$, for $j=1,\dots,K$, that
depend on $R$ and are such that
\begin{equation}
\{x \in D_R \mid d(x,\partial D_R) \geq d\} \subset \cup_{j=1}^K B_{\hat{x}_j, L_j - \lambda}.
\end{equation}
Define the set  $\Omega^R \subset D_R$ by 
\begin{equation}\label{omegar-def}
\Omega^R = \cup_{j=1}^K B_{\hat{x}_j, L_j - \lambda}.
\end{equation}
The set $\Omega^R$ is open and we can assume that the sequence $\{B_{\hat x_j,L_j - \lambda}\}_{j=1}^{K}$ contains $g B_{\hat x_j,L_j}$, for all $g\in G_D$, $j=1,\dots,K$, so that 
 \begin{equation}
G_{\Omega^R}=G_D=G_{a_1}. 
 \end{equation}

\subsection{The replacement lemmas}\label{replacement-lemmas}
Let $\bar{q}^\prime > 0$ be the constant in Lemma \ref{lemma-sigma} and let $c>0$ as before in \eqref{final-comp}. Assume $R\geq R_0$ and $\Omega^R$ as in \eqref{omegar-def}.

\begin{lemma}\label{q-definition} 
Let $\mathsf{q} : \Omega^R \to \R$ be the solution of 
\begin{equation}\label{final-comp2}
\begin{cases}
\Delta \mathsf{q} = c^2 \mathsf{q}, &\text{in } \Omega^R,\smallskip\\
\mathsf{q} = \bar q^\prime, &\text{on } \partial\Omega^R,
\end{cases}
\end{equation}
Then,
\begin{equation}\label{final-comp3}
\mathsf{q}(g x) = \mathsf{q}(x),\text{ for all } g\in G_{\Omega^R}= G_D= G_{a_1}.
\end{equation}
Moreover, 
\begin{equation}\label{ex-bound1}
\mathsf{q}(x) \leq K \mathrm{e}^{-k d(x,\partial\Omega^R)}, \text{ for } x\in\Omega^R,
\end{equation}
and, in particular, if $d>0$ is as in Lemma \ref{d-exists},  
\begin{equation}\label{ex-bound2}
\mathsf{q}(x) \leq K \mathrm{e}^{-k d(x,\partial D_R)}, \text{ in } B_{x,d}\subset D_R.
\end{equation}
for some constants $K, k>0$ independent of $R$.
\end{lemma}

\begin{proof} The invariance follows from uniqueness. The maximum principle implies $\mathsf{q} \leq \bar{q}^\prime$. It follows that if $\varphi$ is the solution of equation \eqref{final-comp2} on the ball with center $x$ and radius $d(x,\partial\Omega^R)$ with boundary condition $\varphi = \bar{q}$, we have $\mathsf{q} \leq \varphi$. This and the estimate \eqref{ex-bound} in Lemma \ref{lemma-phi1} imply \eqref{ex-bound1} for some $K, k>0$ independent of $R$. The last estimate follows from $d(x,\partial D_R)\leq d(x,\partial\Omega^R)+d$, after changing $K$ to $K \mathrm{e}^{kd}$.
\end{proof} 

\begin{lemma} \label{comparison-lemma-1}
Let $A\subset D_R$ be an open connected set with Lipschitz boundary and let $\Phi$ the solution of the problem  
\begin{equation}\label{comparison-problem1}
\begin{cases}
\Delta \Phi = 0, &\text{in } A,\smallskip\\
\Phi = f, &\text{on } \partial A,
\end{cases}
\end{equation}
for a smooth function $f: \partial A \to \R$. Assume that $f>0$ so that
\begin{equation*}
 \Phi_m =\min_{x\in A}\Phi(x)>0.
\end{equation*}
Assume also that $A$, $f$, $u\in\mathcal{U}^{\rm Pos}$, and $0<b\leq\Phi_m$ satisfy the following.
\renewcommand{\labelenumi}{(\alph{enumi})}
\begin{enumerate}
\item $A$ satisfies (i) in Lemma \ref{invariance}.
\item $f$  is the trace of a smooth map $f^*$ that satisfies
\[ f^*(g x) = f^*(x),\text{ for all } x\in A,\, g\in G_A.\]
\item $q^u \in L^\infty (D_R)$ and  $q^u|_{\partial A} \leq f$, on $\partial A$.
\item The set $A_b := \{ x \in A \mid q^u(x) > b \}$ is open and $\nu^u|_{\overline{A_b}}$ is $C^1$ smooth. 
\end{enumerate}
Then, there is a $v \in\mathcal{U}^{\rm Pos}$ such that
\renewcommand{\labelenumi}{(\roman{enumi})}
\begin{enumerate}
\item $\nu^v = \nu^u$, on $D_R\setminus S_u,\, S_u= \{ x \in D_R \mid q^u=0 \}$.
\item $q^v \leq \Phi$, in $A$.
\item $v|_{B_R \setminus\tilde A} = u|_{B_R \setminus\tilde A}$,\, $\tilde A=\cup_{g\in G}g A$.
\item $J_{B_R}(v) \leq J_{B_R} (u)$.
\end{enumerate}
\end{lemma}

\begin{proof}
Lemma \ref{thre-two} implies the existence of a minimizer $\rho\in W^{1,2}(A_b)\cap L^\infty(A_b)$ of $\mathcal{K}_{A_b}$ on the subset of the functions that satisfy the Dirichlet condition
\begin{equation}\label{b-cond}
\rho = q^u, \text{ on } \partial A_b,
\end{equation}
and the invariance condition 
\begin{equation}\label{rho-invariance}
 \rho(g x)= \rho(x),\text{ for } x\in A_b,\, g\in G_{A_b}.
\end{equation}

Let $A_b^* := \{x \in A_b \mid \rho(x) > \Phi\}$. Then we have that $\rho$ satisfies
\begin{equation}\label{rho}
\begin{split}
\int_{A_b^*} \Big\{ \langle \tilde{u}_{qq}(\rho, \nu^u), \tilde{u}_q(\rho, &\,\nu^u) \rangle |\nabla \rho|^2 + \sum_{j=1}^{n} \langle \tilde{u}_{q\nu}(\rho, \nu^u) \nu_{x_j}^u, \tilde{u}_\nu(\rho, \nu^u) \nu_{x_j}^u \rangle \Big\} \eta\, \dd x \\&+ \int_{A_b^*} \langle \tilde{u}_{q}(\rho, \nu^u), \tilde{u}_q(\rho, \nu^u) \rangle \nabla \rho \nabla \eta\, \dd x = 0, 
\end{split}
\end{equation}
for all $\eta \in W^{1,2}_{0}(A_b)\cap L^\infty(A_b)$ that satisfy \eqref{rho-invariance} and vanish on $\{\rho\leq\Phi\}$. Taking $\omega = \omega_j$ in \eqref{quadratic}, with $\alpha = \rho_{x_j}$, $\beta = 1$, and $t = \nu_{x_j}^u$, we obtain, for $\eta \geq 0$,
\begin{equation}\label{omegaj}
\bigg( \sum_{j=1}^{n} \omega_j \bigg) \eta = \bigg(\! - \langle \tilde{u}_{qq}, \tilde{u}_q \rangle |\nabla \rho|^2 + \sum_{j=1}^{n} \langle \tilde{u}_{q\nu} \nu_{x_j}^u, \tilde{u}_\nu \nu_{x_j}^u \rangle - 2 \sum_{j=1}^{n} \langle \tilde{u}_{q\nu} \nu_{x_j}^u, \tilde{u}_q \rangle \rho_{x_j} \bigg) \eta \geq 0.
\end{equation}
Integrating \eqref{omegaj} and subtracting from \eqref{rho} gives
\begin{equation}\label{weak-rho}
\int_{A_b^*} \nabla \rho \nabla ( \langle \tilde{u}_q, \tilde{u}_q \rangle\eta ) \dd x \leq 0,
\end{equation}
for all nonnegative $\eta \in W_{0}^{1,2}(A_b)\cap L^\infty(A_b)$ that satisfy \eqref{rho-invariance} and vanish on the set $\{\rho\leq\Phi\}$. On the other hand, the definition of $\Phi$ implies
\begin{equation}\label{weak-phi}
\int_{A} \nabla\Phi\nabla\zeta \dd x = 0, 
\end{equation}
for all $\zeta \in W_{0}^{1,2}(A)$. We take $\eta = {(\rho-\Phi)^+}/{\langle \tilde{u}_q, \tilde{u}_q \rangle}$ and $\zeta=(\rho-\Phi)^+$ and subtract \eqref{weak-phi} from \eqref{weak-rho} to obtain
\begin{equation*}
\int_{ A_b^*} \vert\nabla(\rho-\Phi)^+\vert^2 \dd x \leq 0,
\end{equation*}
and, therefore, using also $\rho\leq\Phi$ for $x\in A_b\setminus A_{b}^{*}$,
\begin{equation}\label{rho-phi}
\rho \leq \Phi, \text{ in } A_b.
\end{equation}

\begin{figure}
\begin{center}
\begin{picture}(0,0)%
\includegraphics{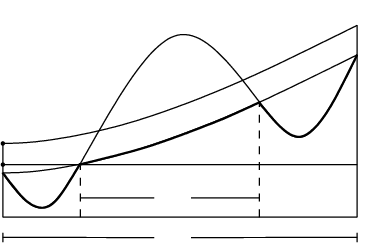}%
\end{picture}%
\setlength{\unitlength}{4144sp}%
%
\begin{picture}(2834,1913)(1104,12)
\put(2342, 66){$A$}%
\put(2321,369){$A^{+}_{b}$}%
\put(3928,1739){$\Phi$}%
\put(3928,1514){$\rho$}%
\put(838,839){$\Phi_m$}%
\put(860,675){$b$}%
\put(2701,1739){$q^u$}%
\end{picture}%
\caption{The functions $\rho$ and $\Phi$.}
\label{figure:figure2}
\end{center}
\end{figure}

Define $q^v : A \to \R$  by setting
\begin{equation}\label{sv-def}
q^v(x) = 
\begin{cases}
\min\{ \rho(x), q^u(x) \}, &\text{for } x \in A_b,\smallskip\\
q^u(x), &\text{for } x \in A\setminus A_b,
\end{cases}
\end{equation}
and observe that (ii) follows from this, from the inequality and from \eqref{rho-phi} and $q^u\leq b\leq\Phi_m$ in $A\setminus A_b$. Observe also that 
\begin{equation}\label{qv-qu}
q^v = q^u,\text{ for } x \in\partial A.
\end{equation}
 $A\subset D_R$ implies $G_A\subset G_D$. This and \eqref{nueq-symmetry} imply that $q^u$ and therefore also $q^v$ satisfies \eqref{rho-invariance}.  It follows that, if we set
\begin{equation}\label{nuv-nuu}
\nu^v  = \nu^u , \text{ on }  A\setminus S_u,
\end{equation}
and recall \eqref{nu-symmetry} and \eqref{nueq-symmetry}, then the map $v : A\to\R^n$ defined by 
\[ v(x) =  
\begin{cases}
\tilde{u}( q^v(x); \nu^u(x)), &\text{for } x\in A\setminus S_u,\smallskip\\
0, &\text{for } x\in A\cap S_u,
\end{cases} \]
satisfies (i) and (ii) in Lemma \ref{invariance}. Therefore, $v$ can be extended to an equivariant map $v:\tilde A \to \R^n$, $\tilde A = \cup_{g\in G}g A$. From \eqref{qv-qu} and \eqref{nuv-nuu} we see that $v$ and $u$ have the same trace on $\partial\tilde A$. It follows that, if we extend $v$ to the whole $B_R$ by setting $v=u$, on $B_R\setminus\tilde A$, then we have a well-defined equivariant map $v \in W_{\mathrm{E}}^{1,2}(B_R;\R^n)$. This in particular proves (iii). Moreover, $v$ is a positive map because $u$ is and, by definition, $q^v \leq q^u$. It remains to prove (iv). We argue as follows. The definition of $v$ implies
\[ J_{B_R} (v) = J_{\tilde A} (v) + J_{B_R \setminus\tilde A} (u), \text{ with } J_{\tilde A} (v) = \frac{\vert G\vert}{\vert G_A\vert} J_A (v).\]
Let ${A_b}^+\subset A_b$ be the subset ${A_b}^+ := \{ x \in A_b \mid q^u(x) > \rho(x) \}$ and observe that 
\begin{equation*}
J_{{A_b}^+} (v) = \mathcal{K}_{{A_b}^+} (\rho) + \mathcal{V}_{{A_b}^+} (\rho)\leq
\mathcal{K}_{{A_b}^+} (q^u) + \mathcal{V}_{{A_b}^+} (q^u)=J_{{A_b}^+} (u),
\end{equation*}
where we have used the minimality of $\rho$ and \eqref{v-pos}. Therefore, recalling that $v=u$ on $A\setminus {A_b}^+$ we obtain  
\begin{equation*}
J_A (v) = J_{{A_b}^+} (v) + J_{A \setminus {{A_b}^+}} (u)\leq J_A (u) \qedhere
\end{equation*}
\end{proof}
 
\begin{lemma}\label{comparison-lemma-2}
Let $c, \bar q$ be as in Hypothesis \ref{h1} and $A$ as in Lemma \ref{comparison-lemma-1}, and let $\Psi$ be the solution of the problem
\begin{equation}\label{comparison-problem2}
\begin{cases}
\Delta \Psi = c^2 \Psi, &\text{in } A,\smallskip\\
\Psi = h, &\text{on } \partial A,
\end{cases}
\end{equation}
for a smooth function $h: \partial A \to \R$. Assume that $h>0$ so that
\begin{equation*}
 \Psi_m =\min_{x\in A}\Psi(x)>0.
\end{equation*}
Assume that $A$, $h$, $u\in\mathcal{U}^{\rm Pos}$, and $0<b\leq\Psi_m$  satisfy the following.
\renewcommand{\labelenumi}{(\alph{enumi})}
\begin{enumerate}
\item $A$ satisfies (i) in Lemma \ref{invariance}.
\item $h$ is the trace of a smooth map $h^*$ that satisfies
\[ h^*(g x) = h^*(x), \text{ for all } x\in A,\, g\in G_A.\]
\item There holds
\[ q^u(x) \leq \bar{q}, \text{ for } x \in A, \]
and
\[ q^u|_{\partial A} \leq h\leq\bar{q}, \text{ on } \partial A.\]
\item The set $A_b := \{ x \in A \mid q^u(x) > b \}$ is open and $\nu^u|_{\overline{A_b}}$ is $C^1$ smooth.
\end{enumerate}
Then, there is a $v \in\mathcal{U}^{\rm Pos}$ such that
\renewcommand{\labelenumi}{(\roman{enumi})}
\begin{enumerate}
\item $\nu^v=\nu^u$, on $D_R\setminus S_u$.
\item $q^v \leq \Psi$,\ in $A$.
\item $v|_{B_R \setminus\tilde A} = u|_{B_R \setminus\tilde A}$,\, $\tilde A=\cup_{g\in G}g A$.
\item $J_{B_R}(v) \leq J_{B_R} (u)$.
\end{enumerate}
\end{lemma}

\begin{proof}
The proof parallels the proof of Lemma \ref{comparison-lemma-1}. We minimize the functional $\mathcal{E}_{A_b}$ on the weakly closed subset of $ W^{1,2}(A_b)$ defined by \eqref{b-cond} and \eqref{rho-invariance} in the proof of Lemma \ref{comparison-lemma-1} and obtain that, if $\rho$ is a minimizer of $\mathcal{E}_{A_b}$ and $A_b^*=\{x\in A_b \mid \rho>\Psi\}$, then we have  
\begin{equation}\label{weak-rho-eta}
\int_{A_b^*} \left\{ \nabla \rho \nabla ( \langle \tilde{u}_q(\rho; \nu^u), \tilde{u}_q(\rho; \nu^u) \rangle\eta ) + V_q(\rho,\nu^u)\eta \right\} \dd x \leq 0,
\end{equation}
for all nonnegative $\eta \in W_{0}^{1,2}(A_b)\cap L^\infty(A_b)$ that satisfy \eqref{rho-invariance} and vanish on the set $\{\rho\leq\Psi\}$. From \eqref{weak-rho-eta} and \eqref{vc-pos} it follows
\begin{equation}\label{weak-rho-eta1}
\int_{A_b^*} \left\{ \nabla \rho \nabla ( \langle \tilde{u}_q(\rho; \nu^u), \tilde{u}_q(\rho; \nu^u) \rangle\eta ) + c^2\langle \tilde{u}_q(\rho; \nu^u), \tilde{u}_q(\rho; \nu^u) \rangle\rho\eta \right\} \dd x \leq 0,
\end{equation} 
From \eqref{comparison-problem2} we also have
\begin{equation}\label{weak-psi}
\int_{A} \nabla\Psi\nabla\zeta+c^2\Psi\zeta=0,\text{ for } \zeta\in W_{0}^{1,2}(A).
\end{equation}
If we set $\eta={(\rho-\Psi)^+}/{\langle \tilde{u}_q(\rho, \nu^u), \tilde{u}_q(\rho, \nu^u) \rangle}$ in \eqref{weak-rho-eta1} and subtract \eqref{weak-psi} with $\zeta=(\rho-\Psi)^+$ from \eqref{weak-rho-eta1}, we obtain
\begin{equation}
\int_{A_b^*} \vert\nabla(\rho-\Psi)^+\vert^2 + c^2{(\rho-\Psi)^+}^2 \dd x \leq 0.
\end{equation}
From this it follows that $A_{b}^{*}$ has zero measure and therefore we have
\begin{equation}
\rho \leq \Psi, \text{ in } A_b.
\end{equation}
The remaining proof is analogous to the proof of Lemma \ref{comparison-lemma-1}.
\end{proof}

\begin{proposition}\label{sigmacomparison}
Let $\lambda$, $l_0$, $l \geq l_0$, $\delta$, $L=l+\lambda$, and $\sigma$ be as in Lemma \ref{lemma-sigma}. Let
\begin{equation*}
\sigma_m =\min_{x\in B_L}\sigma(x)>0.
\end{equation*}
and set $\sigma_{\hat x} := \sigma(\cdot-\hat x)$. Assume that $B_{\hat x,L}\subset D_R$ satisfies 
$B_{\hat x,L}\cap\tilde\Gamma_{\hat x}=\varnothing$ and also assume that $u\in\mathcal{U}^{\rm Pos}$ and $0<b\leq\sigma_m$ satisfy
\renewcommand{\labelenumi}{(\alph{enumi})}
\begin{enumerate}
\item $q^u\leq \overline{Q}$, for $x\in \overline{B_L}$ (cf.\ \eqref{quasifinal-comp5}),
\item $q^u \leq \bar q$, for $x\in \overline{B_{\hat x,L-\lambda}}$,
\item the set $A_b^\circ := \{ x \in D_R \mid q^u(x)>b \}$ is open and $\nu^u|_{\overline{A_b^\circ}}$ is $C^1$ smooth.
\end{enumerate}
Then, there exists $v\in\mathcal{U}^\mathrm{Pos}$ such that
\renewcommand{\labelenumi}{(\roman{enumi})}
\begin{enumerate}
\item $\nu^v=\nu^u$, on $D_R\setminus S_u$,
\item $q^v \leq \sigma_{\hat x}$, for $x\in \overline{B_{\hat x,L}}$,
\item $v=u$, for $x\in B_R\setminus\tilde B_{\hat x,L}$, $\tilde B_{\hat x,L} = \cup_{g\in G} B_{\hat x,L}$,
\item $J_{B_R}(v) \leq J_{B_R}(u).$
\end{enumerate}
\end{proposition}

\begin{proof}
Set $\varphi_{j,\hat x} = \varphi(\cdot-\hat x)$, for $j=1,2$, and $\vartheta_{\hat x} = \vartheta(\cdot-\hat x)$ with $\varphi_j$, for $j=1,2$, as in \eqref{final-comp}, \eqref{quasifinal-comp}, and $\vartheta$ as in \eqref{small-comp}. From Lemma \ref{comparison-lemma-1}, with $A=B_{\hat x,L}\setminus\overline{B_{\hat x,L-\lambda}}$, $A_b=A_b^\circ\cap A$, and $\Phi=\varphi_{2,\hat x}$ and also utilizing Corollary \ref{rad-map-cond}, we can replace $u$ with a map $v\in W_{\mathrm{E}}^{1,2}(B_R;\R^n)$ that satisfies (i), (iii), and (iv), and $q^v\leq\varphi_{2,\hat x},$ for $x\in\overline{B_{\hat x,L}\setminus B_{\hat x,L-\lambda}}$. Similarly, from Corollary \ref{rad-map-cond} and Lemma \ref{comparison-lemma-2}, with $A=B_{\hat x,L-\lambda}$, $A_b=A_b^\circ\cap A$, and $\Psi=\varphi_{1,\hat x}$, we can replace $u$ with a map $v\in W_{\mathrm{E}}^{1,2}(B_R;\R^n)$ that satisfies (i), (iii), and (iv), and $q^v\leq\varphi_{1,\hat x}$ in $\overline{B_{\hat x,L-\lambda}}$. Finally, a further application of Corollary \ref{rad-map-cond} and Lemma \ref{comparison-lemma-1}, with $A=B_{\hat x,L-\lambda+\delta}\setminus\overline{B_{\hat x,L-\lambda-\delta}}$, $A_b=A_b^\circ\cap A$, and $\Phi=\vartheta_{\hat x}$, concludes the proof.
\end{proof}

\begin{proposition}\label{q-comparison}
Assume $R\geq R_0$, $\Omega^R\subset D_R$, and $\mathsf{q} : \Omega^R \to \R$ as in Lemma \ref{q-definition}. Let 
\begin{equation*}
\mathsf{q}_m = \min_{x\in\Omega^R} \mathsf{q}(x) > 0.
\end{equation*}
Assume that $u \in W_{\mathrm{E}}^{1,2}(B_R;\R^n)$ and $0<b\leq\mathsf{q}_m$ satisfy
\renewcommand{\labelenumi}{(\alph{enumi})}
\begin{enumerate}
\item $q^u \leq \bar{q}^{\prime}$, for $x\in\overline{\Omega^R}$, where $\bar{q}^{\prime} < \bar{q}$ is the constant in Lemma \ref{lemma-sigma},
\item the set $A_b := \{x \in A \mid q^u(x) > b \}$ is open and $\nu^u|_{\overline{A_b}}$ is $C^1$ smooth.
\end{enumerate}
Then, there is a $v\in W_{\mathrm{E}}^{1,2}(B_R;\R^n)$ such that
\renewcommand{\labelenumi}{(\roman{enumi})}
\begin{enumerate}
\item $\nu^v = \nu^u$, on $D_R\setminus S_u$,
\item $q^v \leq \mathsf{q}$, for $x \in \overline{\Omega^R}$,
\item $v=u$, for $x\in B_R\setminus\tilde\Omega^R$, $\tilde\Omega^R = \cup_{g\in G}\Omega^R$,
\item $J_{B_R}(v)\leq J_{B_R}(u).$
\end{enumerate}  
\end{proposition}

\begin{proof}
It suffices to apply Lemma \ref{comparison-lemma-2} with $A = \Omega^R$ and $\Psi = \mathsf{q}$ 
and Lemma \ref{invariance}, taking into account that $G_{\Omega^R}=G_D=G_{a_1}$.
\end{proof}

\section{Proof of Theorem \ref{theorem1}}\label{proof}
Let $R> R_0$, $\Omega^R$, $\bar q$, $\bar{q}^{\prime}<\bar{q}$, and $F_R$ be as before. Fix a number $q_0 \in (\bar{q}^{\prime}, \bar{q})$ and define the {\em admissible} set $\mathcal{A}^R \subset  W_{\mathrm{E}}^{1,2}(B_R;\R^n)$ by setting
\begin{equation}\label{admissible-set}
\mathcal{A}^R := \left\{ u \in W_{\mathrm{E}}^{1,2}(B_R,\R^n) \mid u(\overline{F_R}) \subset \overline{F};\; q^u\leq q_0,\text{ for } x\in\overline{\Omega^R}+B_{{\delta^{\prime}}\!/{2}} \right\},
\end{equation}
where $\delta^{\prime}$ is the constant in Lemma \ref{lemma-sigma}. 

\medskip
 
\noindent {\em Step 1. There exists a minimizer $u_R \in W^{1,2}_{\mathrm{E}}(B_R; \R^n)$ of the problem
\begin{equation}\label{minimization}
\min_{u\in\mathcal{A}^R}J_{B_R}(u). 
\end{equation}
Moreover,
\begin{equation}\label{pointwise-bound}
|u| \leq M,
\end{equation}
where $M$ is the constant in Hypothesis \ref{h2}.}

\medskip

For $u\in W_{\mathrm{E}}^{1,2}(B_R;\R^n)$ we have $J_{B_R}(u)=J_{\{\vert u\vert>M\}}(u) + J_{B_R\setminus \{\vert u\vert>M\}}(u)$. Set $\nu= u /\vert u\vert$, for $|u| \neq 0$; then
\begin{align*}
J_{\{\vert u\vert>M\}}(u) &= \int_{\{\vert u\vert>M\}} \Bigg\{ \frac{1}{2} \Bigg( \vert\nabla\vert u\vert\vert^2+\vert u\vert^2\sum_{j=1}^n\langle\nu_{x_j},\nu_{x_j}\rangle \Bigg)+W(\vert u\vert\nu) \Bigg\} \dd x\\
&> \int_{\{\vert u\vert>M\}} \Bigg\{ \frac{1}{2} M^2 \sum_{j=1}^n \langle\nu_{x_j},\nu_{x_j}\rangle + W(M\nu) \Bigg\} \dd x \\
&= J_{\{\vert u\vert>M\}}(M\nu),
\end{align*}
where we have also used Hypothesis \ref{h2}. This proves that minimizers satisfy the $L^\infty$ bound (\ref{pointwise-bound}) and therefore that we can restrict to the subset of $\mathcal{A}^R$ of the maps $u$ that satisfy 
\begin{equation}\label{bobo}
q^u \leq \overline{Q}, \text{ for } x \in D_R, \text{ where } \overline{Q}=\max_{u\in\overline{D},\, \vert u\vert\leq M}Q(u).
\end{equation}
Define
\begin{equation}\label{u-aff-def}
u_{\mathrm{aff}}(x) := 
\begin{cases}
d(x;\partial D) a_1, &\text{for } x \in D_{R} \text{ and } d(x;\partial D) \leq 1,\smallskip\\
a_1,  &\text{for } x \in D_{R} \text{ and } d(x;\partial D) \geq 1.
\end{cases}
\end{equation}
The map $u_{\mathrm{aff}}$ trivially satisfies condition (ii) in Lemma \ref{invariance} and therefore extends to an equivariant map on $B_R$. Clearly, $u_\mathrm{aff} \in \mathcal{A}^R$. By the nonnegativity of $W$ and a simple calculation,
\begin{equation}
0 \leq \inf_{u\in\mathcal{A}^R} J_{B_R}(u) < J_{B_R} (u_\mathrm{aff}) < C R^{n-1},
\end{equation}
for some constant $C$ independent of $R$.

Let $\{ u_k \}_{k=1}^\infty \subset \mathcal{A}^R$ be a minimizing sequence.  Without loss of generality,  we may assume that \eqref{pointwise-bound} holds for each value of $k$. We have
\begin{equation}
\frac{1}{2} \int_{B_R} |\nabla u_k|^2 \dd x < J_{B_R}(u_\mathrm{aff}) < C R^{n-1} \quad\text{and}\quad \int_{B_R} |u_k|^2 \dd x < C_R,
\end{equation}
where $C_R$ denotes a constant depending on $R$. By standard arguments, we obtain, possibly along a subsequence,
\begin{equation}
u_k \to u_R, \text{ a.e.},
\end{equation}
where $u_R\in\mathcal{A}^R$ is a minimizer of \eqref{minimization}. Clearly, $q^{u_R} \leq q_0$ on $\overline{\Omega^R}+B_{{\delta^\prime}\!/{2}}$ and $|u_R(x)|\leq M$ on $B_R$. This finishes the proof of Step 1.

\medskip \noindent {\em Step 2.  The minimizer $u_R$, for $R\geq R_0$, satisfies 
\begin{eqnarray}\label{ur-equilibrium}
u(\cdot,t,u_R)=u_R,\quad t>0,
\end{eqnarray} 
where, as before, $u(\cdot,t,u_R)$ is the solution of (\ref{evolution-problem}) with initial condition $u_0=u_R$.}

\medskip

Before proving \eqref{ur-equilibrium} we observe that \eqref{ur-equilibrium} implies that $u_R$ is a classical solution of $\Delta u - W_u(u) = 0$ on the ball $B_R$ with Neumann boundary condition. Moreover, by Theorem \ref{theorem-2-1}, $u_R\in\mathcal{U}_0^{\rm Pos}$. 

We argue by contradiction. Assume that \eqref{ur-equilibrium} does not hold. There is a sequence $\tilde{t}>0$ that converges to $0$ and it is such that 
\begin{equation}\label{ur-energy}
J_{B_R}(\tilde{u}_R) < J_{B_R}(u_R),
\end{equation}
where we have set $\tilde{u}_R = u(\cdot,\tilde{t},u_R)$. If $\tilde{t}>0$ is sufficiently small, we also have 
\begin{equation}\label{less-q-bar}
q^{\tilde{u}_R} \leq\bar{q}, \text{ for } x\in\overline{\Omega^R}.
\end{equation}
This follows from $u_R \in \mathcal{A}^R$, which implies $q^{u_R} \leq q_0< \bar{q}$, for $x\in\overline{\Omega^R}+B_{{\delta^\prime}\!/{2}}$. We now fix $\tilde{t}$ as above. From the definition of $\mathcal{A}^R$, Theorem \ref{theorem-2-1}, and the fact that \eqref{evolution-problem} preserves the pointwise bound \eqref{pointwise-bound}, it follows that
\begin{equation}
\tilde{u}_R\in\mathcal{U}_0^{\rm Pos} \text{ and } q^{\tilde{u}_R}\leq\overline{Q},\text{ for } x\in\overline{D_R}.
\end{equation}
Let $\sigma_m$ be as in Proposition \ref{sigmacomparison} and let $\bar{L} = \max\{L \mid B_{x,L}\subset D_R\}$. Observe that $\sigma_m$ is a nonincreasing function of $L\in[L_0,\bar{L}]$ and that there is a $\bar{\sigma}>0$ such that 
\begin{equation}
\sigma_m \geq\bar{\sigma},\quad L\in[L_0,\bar{L}].
\end{equation}
Since $\tilde{u}_R\in C^2(\overline{B_R};\R^n)$, given $0<b\leq\bar{\sigma}$, the set $A_b^\circ = \{ x \in D_R \mid q^{\tilde{u}_R} > b \}$ is open and $\nu^{\tilde{u}_R}\vert_{\overline{A_b^\circ}}$ is $C^2$.
Assume that $q_0 < q^{\tilde{u}_R} \leq \bar{q}$ on some subset of $\overline{\Omega^R}$ and let $B_{\hat x_j,L_j}$, for $j=1,\dots,K$ be the sequence in the definition \eqref{omegar-def} of $\Omega^R$.   Since we also have that $B_{\hat x,L}\cap\tilde\Pi_{\hat x}=\varnothing$, we see that $\tilde{u}_R$, $B_{\hat x_1,L_1}$, $A_b^\circ$, satisfy all assumptions of Proposition \ref{sigmacomparison}, therefore, recalling that $q^v\leq\sigma_{\hat{x}}$ implies $q^v\leq\bar{q}'<q_0$, for $x\in B_{\hat x_1,L_1+\delta^\prime-\lambda}$, by applying Proposition \ref{sigmacomparison} we conclude that there exists a $v_1\in\mathcal{U}_0^{\rm Pos}$ with $J_{B_R}(v_1)\leq J_{B_R}(\tilde{u}_R)<J_{B_R}(u_R)$ and 
\begin{equation}
q^{v_1}\leq \bar{q}^\prime<q_0, \; x\in B_{\hat x_1,L_1+\delta^\prime-\lambda}.
\end{equation} 
The map $v_1$ given by Proposition \ref{sigmacomparison} satisfies the same assumptions as $\tilde u_R$, therefore we can apply again Proposition \ref{sigmacomparison} with $v_1$, $B_{\hat x_2,L_2}$, $A_b^\circ$ to obtain the existence of a map $v_2$ that belongs to $\mathcal{U}_0^{\rm Pos}$, has $q^{v_2}\leq q^{v_1}$, and satisfies
\begin{equation}
q^{v_2} \leq \bar{q}^\prime < q_0, \text{ for } x\in\cup_{j=1}^2 B_{\hat x_j,L_j+\delta^\prime-\lambda}
\end{equation} 
together with $J_{B_R}(v_2)\leq J_{B_R}(v_1)\leq J_{B_R}(\tilde{u}_R)<J_{B_R}(u_R)$. After $K$ similar steps we end up with a map $v_K\in\mathcal{U}_0^{\rm Pos}$ that satisfies
\begin{equation}\label{bound-ur} 
q^{v_K} \leq \bar{q}^\prime<q_0, \text{ for } x\in\cup_{j=1}^K B_{\hat x_j,L_j+\delta^\prime-\lambda}
\end{equation}
together with all the other requirements for membership in $\mathcal{A}^R$ and moreover,
\begin{equation}
J_{B_R}(v_K)\leq J_{B_R}(\tilde{u}_R)<J_{B_R}(u_R).
\end{equation}   
This contradicts the minimality of $u_R$ and establishes \eqref{ur-equilibrium}. The proof of Step 2 is concluded.
 
\medskip \noindent {\em Step 3. (Conclusion).} From \eqref{bound-ur} it follows that we can apply Proposition \ref{q-comparison} to conclude that $q^{u_R}(x)\leq\mathsf{q}(x)$, for $x\in\overline{\Omega^R}$ and therefore that, by Lemma \ref{q-definition} and \eqref{bobo},
\begin{equation}\label{ex-uniform}
  \vert u_R(x)-a_1\vert \leq K e^{-k d(x,\partial D_R)}, \text{ for } x\in D_R,
\end{equation}
for some constants $k,\, K>0$ independent of $R$.
As remarked earlier, $u_R$ satisfies
\begin{equation}\label{e-l}
\Delta u - W_u(u) = 0, \text{ on }  B_R, \text{ for } R>R_0,
\end{equation}
and the exponential bound \eqref{ex-uniform}.

Finally, the uniform bound \eqref{pointwise-bound} and elliptic regularity, via a diagonal argument, allow us to pass to the limit along a subsequence in $R$ and capture a function
\begin{equation}
u(x) = \lim_{R_n \to\infty} u_{R_n}.
\end{equation}
The uniform bounds \eqref{bound-ur}, \eqref{ex-uniform} imply that the limit function $u$ satisfies the exponential bound in Theorem \ref{theorem1} and that it is a solution of 
\begin{equation}\label{soln}
\Delta u - W_u(u) = 0, \text{ on } \R^n.
\end{equation}

Finally, we argue the strong positivity for $u$. We already know that $u \in \mathcal{U}^{\mathrm{Pos}}$. Take now an open connected set $U \subset F$ which contains some points far enough from $\partial D$, so that by the exponential bound,
\begin{equation}\label{posss}
u (U) \cap F \neq \varnothing.
\end{equation}
As in the proof of Theorem \ref{theorem-2-1}, let
\begin{equation}
z(x) = \langle u(x), \eta_\gamma \rangle, \text{ in } U.
\end{equation}
Then,
\[
\begin{cases}
\Delta z + cz = 0, &\text{in } U, \quad \text{(by \eqref{soln})}\smallskip\\
z \geq 0, &\text{in } U. \quad \text{(by positivity)}
\end{cases}
\]
By a well-known variant of the strong maximum principle, there holds $z > 0$ in $U$, unless $z \equiv 0$. Triviality however is excluded by \eqref{posss}. From this, strong positivity follows.

This concludes the proof of Theorem \ref{theorem1}.\qed

\section*{Acknowledgments}
The authors would like to express their gratitude to the group attending the Applied Analysis and PDE's seminar of the Department of Mathematics at the University of Athens, in particular to Vassilis Papanicolaou and Achilles Tertikas, for useful discussions on the contents of the paper. Thanks are also due to Peter Bates for his numerous suggestions that improved the presentation and to Christos Athanasiadis for his help with the literature on reflection groups. Special thanks also go to our graduate students Apostolos Damialis and Nikolaos Katzourakis for their help with the manuscript. Finally, we would like to acknowledge a stimulating discussion with Ha\"\i m Brezis during the development of the results in the paper.

\nocite{*}
\bibliographystyle{plain}

\end{document}